\def\MT@register@subst@font{\MT@exp@one@n\MT@in@clist\font@name\MT@font@list
 \ifMT@inlist@\else\xdef\MT@font@list{\MT@font@list\font@name,}\fi}
\newcommand{\bit}{\begin{itemize}}    % but see also \benbullet below
\newcommand{\eit}{\end{itemize}}
\newcommand{\ben}{\begin{enumerate}}
\newcommand{\een}{\end{enumerate}}
\newcommand{\benroman}{\ben[\normalfont (i)]}  % *
\let\eroman\een
\newcommand{\bde}{\begin{description}}
\newcommand{\ede}{\end{description}}
\newcommand{\?}{\ensuremath{\mkern0.4\thinmuskip}}   % very small math space
\let\leq=\leqslant
\let\geq=\geqslant
\let\epsilon=\varepsilon
\let\Lambda\varLambda
\let\Gamma\varGamma
\let\Delta\varDelta
\let\Lambda\varLambda
\let\Omega\varOmega
\let\Theta\varTheta
\let\Xi\varXi
\let\Pi\varPi
\let\Sigma\varSigma
\let\oper=\mathbb                               %  operators
\bmdefine{\A}{A}                                %  particular algebras
\bmdefine{\B}{B}
\bmdefine{\D}{D}
\bmdefine{\M}{M}                                %  the monoid of substitutions
\bmdefine{\LLL}{L}                              %  algebraic language
\bmdefine{\Fm}{Fm}                              %  formula algebra
\bmdefine{\zerou}{[0{,}1]}  
\bmdefine{\T}{T}                                %  particular algebras
\newcommand{\Lim}{\oper{L}_{\textsc{c}}^{}}
\newcommand{\PPU}{\oper{P}_{\!\textsc{u}}^{}}
\newcommand{\PPR}{\oper{P}_{\!\textsc{r}}^{}}
\newcommand{\PPp}{\oper{P}_{\!\textsc{p}}^{}}
\newcommand{\PPF}{\oper{P}_{\!\textsc{f}}^{}}
\bmdefine{\boldstar}{\mathchoice{\textstyle*}{\textstyle*}{\textstyle*}{\scriptstyle*}}
\bmdefine{\btau}{\tau}                                  %  transformer tau
\bmdefine{\brho}{\rho}                                  %  transformer rho
\bmdefine{\leibniz}{\Omega}        %  Leibniz operator
\bmdefine{\frege}{\Lambda}         %  Frege operator
\newcommand{\tarskidsp}{\mathord%
   {\m@th\raisebox{0pt}[0pt][0pt]{$\stackrel%
   {\raisebox{-2.7pt}[0ex][0pt]{$\displaystyle \,\?\thicksim$}}%
   {\displaystyle\leibniz}$}}}
\newcommand{\tarskitxt}{\mathord%
   {\m@th\raisebox{0pt}[0pt][0pt]{$\stackrel%
   {\raisebox{-2.7pt}[0ex][0pt]{$\,\?\thicksim$}}{\displaystyle\leibniz}$}}}
\newcommand{\tarskiscr}{\mathord%
   {{\m@th\raisebox{0pt}[0pt][0pt]{$\stackrel%
   {\raisebox{-2.4pt}[0ex][0pt]{$\scriptstyle \,\?\thicksim$}}%
   {\scriptstyle\leibniz}$}}}}
\newcommand{\tarskiscrscr}{\mathord%
   {{\m@th\raisebox{0pt}[0pt][0pt]{$\stackrel%
   {\raisebox{-2pt}[0ex][0pt]{$\scriptscriptstyle \,\?\thicksim$}}%
   {\scriptscriptstyle\leibniz}$}}}}
\newcommand{\tarski}{\@ifnextchar ^ %
   {\mathchoice{\tarskidsp\kern-.07em}{\tarskitxt\kern-.07em}%
   {\tarskiscr\kern-.07em}{\tarskiscrscr\kern-.07em}}%
   {\mathchoice{\tarskidsp}{\tarskitxt}{\tarskiscr}{\tarskiscrscr}}}
\theoremstyle{theorem}
\newtheorem{Theorem}{Theorem}[section]
\newtheorem{Proposition}[Theorem]{Proposition}
\newtheorem{Corollary}[Theorem]{Corollary}
\newtheorem{Claim}[Theorem]{Claim}
\newtheorem{Positive Los Theorem}[Theorem]{Positive \L os Theorem}
\newtheorem{Positive Keisler Isomorphism Theorem}[Theorem]{Positive Keisler Isomorphism Theorem}
\theoremstyle{definition}
\newtheorem{law}[Theorem]{Definition}
\newtheorem{exa}[Theorem]{Example}
\theoremstyle{remark}
\newtheorem{Remark}[Theorem]{Remark}
\DeclareMathOperator{\Res}{Res}
\DeclareMathOperator{\Th}{Th}
\newcommand\er[1]{\mathord{\equiv_{#1}}}
\subjclass[2010]{03C07, 03C10, 03C20}
\keywords{Keisler isomorphism theorem, positive model theory, prime product, positively existentially closed model, h-inductive theory}
\begin{document}
\title[Elementary equivalence in positive logic via prime products]{Elementary equivalence in positive logic via prime products}

\author{T. Moraschini, J.J. Wannenburg, and K. Yamamoto}

\address{Tommaso Moraschini: Departament de Filosofia, Facultat de Filosofia, Universitat de Barcelona (UB), Carrer Montalegre, $6$, $08001$ Barcelona, Spain}
\email{tommaso.moraschini@ub.edu}

\address{Johann J.\ Wannenburg: \'{U}stav informatiky Akademie v\v{e}d \v{C}esk\'{e} republiky, Pod Vod\'{a}renskou v\v{e}\v{z}\'{i} 2, 182 07 Praha 8, the Czech Republic
\and
School of Mathematics, University of the Witwatersrand, South Africa
}\email{jamie.wannenburg@up.ac.za}

\address{Kentaro Yamamoto: \'{U}stav informatiky Akademie v\v{e}d \v{C}esk\'{e} republiky, Pod Vod\'{a}renskou v\v{e}\v{z}\'{i} 2, 182 07 Praha 8, the Czech Republic}\email{yamamoto@cs.cas.cz}

\date{\today}

\begin{abstract}
We introduce \emph{prime products} as a generalization of ultraproducts for positive logic.\ Prime products are shown to satisfy a version of \L o\'s's Theorem restricted to  positive formulas, as well as the following variant of the Keisler Isomorphism Theorem: under the generalized continuum hypothesis, two models have the same positive theory  if and only if  they have isomorphic prime powers of ultrapowers.
\end{abstract}

\maketitle

\section{Introduction}

A map $f \colon M \to N$ between two structures $M$ and $N$ is said to be a \emph{homomorphism} when for every atomic formula $\varphi(x_1, \dots, x_n)$ and $a_1, \dots, a_n \in M$,
\[
M \vDash \varphi(a_1, \dots, a_n) \text{ implies }N \vDash \varphi(f(a_1), \dots, f(a_n)).
\]
Positive model theory is the branch of model theory that deals with the formulas that are preserved by homomorphisms (see, e.g., \cite{Poizat06a,Poizat10b,PoizatYeshkeyev,BenYac03,BenYac03b,BenYacPoiz07}). It is well known that these are precisely the \emph{positive formulas}, that is, the formulas built from atomic formulas and $\bot$ using only $\exists, \land$, and $\lor$. 

The Keisler-Shelah Isomorphism Theorem states that two structures are elementarily equivalent  if and only if  they have isomorphic ultrapowers. This celebrated result was first proved  by Keisler under the generalized continuum hypothesis (GCH) \cite[Thm.\ 2.4]{Ke61}. This assumption was later shown to be redundant by Shelah \cite[p.\ 244]{She71}. The aim of this paper is to prove a version of Keisler's original theorem in the context of positive model theory. 

To this end, we say that two structures are \emph{positively equivalent} when they have the same positive theory. In order to obtain a positive version of Keisler Isomorphism Theorem, we will introduce a generalization of the ultraproduct construction that captures positive equivalence. We term this construction a \emph{prime product} because it is obtained by replacing the index set $I$ typical of an ultraproduct by a poset $\mathbb{X}$ and the ultrafilter over $I$ by a \emph{prime} filter of the bounded distributive lattice of upsets of the poset $\mathbb{X}$. The case of traditional ultraproducts is then recovered by requiring the order of $\mathbb{X}$ to be the identity relation.

Prime products and positive formulas are connected by the natural incarnation of \L os Theorem in this context (Theorem \ref{Positive Los Theorem}). As a consequence, prime products preserve not only positive formulas, but also the universal closure of the implications between them, known as \emph{basic h-inductive sentences} \cite{PoizatYeshkeyev} (Proposition \ref{Prop : prime products preserve h-inductive}). This allows us to describe the classes of models of h-inductive theories as those closed under isomorphisms, prime products, and ultraroots (Corollary \ref{Cor : h-inductive classes via prime products}).

Our main result states that under GCH two structures have the same \emph{positive theory}  if and only if  they have isomorphic \emph{prime powers} of ultrapowers (Theorem \ref{Positive Keisler Isomorphism Theorem}). The same result holds without GCH, provided that prime powers are replaced by \emph{prime products} in the statement (Theorem \ref{thm:old}). Notably, the presence of ultrapowers cannot be removed from this theorems, as there exists positively equivalent structures without isomorphic prime powers (Example \ref{Exa : positive equivalent without prime powers}).

\section{Prime products}

A subset $V$ of a poset $\mathbb{X} = \langle X; \leq \rangle$ is said to be an \emph{upset} when for every $x, y \in X$,
\[
\text{if }x \in V\text{ and }x \leq y\text{, then }y \in V.
\]
The \emph{downsets} of $\mathbb{X}$ are defined dually. An upset $V$ of $\mathbb{X}$ is \emph{proper} when it differs from $X$, and it is \emph{principal} when it coincides with
\[
{\uparrow}x \coloneqq \{ y \in X \mid x \leq y \},
\]
for some $x \in X$. When ordered under the inclusion relation, the family $\mathsf{Up}(\mathbb{X})$ of upsets of $\mathbb{X}$ forms a bounded distributive lattice
\[
\mathsf{Up}(\mathbb{X}) \coloneqq \langle \mathsf{Up}(\mathbb{X}); \cap, \cup, \emptyset, X \rangle.
\]

\begin{law}
A \emph{filter} over a poset $\mathbb{X}$ is a nonempty upset of the lattice $\mathsf{Up}(\mathbb{X})$ which, moreover, is closed under binary intersections. In this case, $F$ is said to be \emph{prime} when it is proper and for every $V, W \in \mathsf{Up}(\mathbb{X})$,
\[
V \cup W \in F \text{ implies that either }V \in F \text{ or }W \in F.
\]
\end{law}

\begin{Remark}\label{Rem: filters on X}
Given a set $X$, we denote the poset whose universe is $X$ and whose order is the identity relation by $\mathsf{id}(X)$. In this case, $\mathsf{Up}(\mathsf{id}(X)) = \mathcal{P}(X)$. Furthermore, the filters (resp.\ prime filters) over $\mathsf{id}(X)$ coincide with the filters (resp.\ ultrafilters) over the set $X$.
\qed
\end{Remark}

An \emph{ordered system} (of structures) comprises a nonempty family $\{ M_x \mid x \in X \}$ of similar structures indexed by a poset $\mathbb{X}$ and a family of homomorphisms $\{ f_{xy} \colon M_x \to M_y \mid x, y \in X \text{ and }x \leq y \}$ such that $f_{xx}$ is the identity map on $M_x$ and for every $x, y, z \in X$,
\[
x \leq y \leq z \text{ implies }f_{xz} = f_{yz}\circ f_{xy}.
\]
A poset is said to be a \emph{wellfounded forest} when its principal downsets are well ordered.

We will associate a new structure with every ordered system $\{ M_x \mid x \in X \}$ indexed by a wellfounded forest $\mathbb{X}$ and every filter $F$ over $\mathbb{X}$ as follows.  First, for every $V \in F$ let
    \begin{align*}
S_V \coloneqq \{ a \in \prod_{x \in V} M_x \mid  \text{for every }y, z \in V,  \text{ }y \leq z \text{ implies } f_{yz}(a(y)) = a(z)\}
    \end{align*}
   and consider the union
    \[
    S_F \coloneqq \bigcup_{V \in F}S_V.
    \]
 Then for every $a \in S_F$ let $V_a$ be the domain of $a$, that is,
\[
V_a \coloneqq \text{the unique }V \in F\text{ such that }a \in S_V.
\]
It will often be convenient to restrict the sequence $a$ to some $V \in F$ such that $V\subseteq V_a$ as follows:
\[
a{\upharpoonright_{V}} \coloneqq \langle a(x) \mid x \in V \rangle. 
\]
Notice that from $a \in S_F$ it follows that $a_{\upharpoonright_{V}} \in S_V$. Lastly, for every formula $\varphi(v_1, \dots, v_n)$ and $a_1, \dots, a_n \in S_F$ let
\[
\llbracket \varphi(a_1, \dots, a_n) \rrbracket \coloneqq \{ x \in X \mid x \in V_{a_1} \cap \dots \cap V_{a_n} \text{ and }M_x \vDash \varphi(a_1(x), \dots, a_n(x)) \}.
\]
We define an equivalence relation on $S_F$ as follows: for every $a, b \in S_F$,
\[
{a} \equiv_{F} {b} \, \, \text{  if and only if  } \, \, \llbracket a = b \rrbracket \in F.
\]
The proof of the following observation is a routine exercise.

\begin{Proposition}\label{Prop: Prime products are well defined}
Let $g$ be a basic $n$-ary operation, $R$ a basic $n$-ary relation, $a_1, \dots, a_n, b_1, \dots, b_n \in S_F$, and
\[
V_a \coloneqq V_{a_1} \cap \dots \cap V_{a_n} \, \, \text{ and } \, \, V_b \coloneqq V_{b_1} \cap \dots \cap V_{b_n}.
\]
Then $g^{\prod_{x \in V_a}M_x}(a_1{\upharpoonright_{V_a}}, \dots, a_n{\upharpoonright_{V_a}}), g^{\prod_{x \in V_b}M_x}(b_1{\upharpoonright_{V_b}}, \dots, b_n{\upharpoonright_{V_b}}) \in S_F$. Moreover, if ${a_m} \equiv_{F} {b_m}$ for every $m \leq n$, then
\[
\llbracket \? g^{\prod_{x \in V_a}M_x}(a_1{\upharpoonright_{V_a}}, \dots, a_n{\upharpoonright_{V_a}}) = g^{\prod_{x \in V_b}M_x}(b_1{\upharpoonright_{V_b}}, \dots, b_n{\upharpoonright_{V_b}}) \rrbracket \in F
\]
and
\[
\llbracket R(a_1, \dots, a_n) \rrbracket  \in F \, \, \text{  if and only if  }\, \, \llbracket R(b_1, \dots, b_n) \rrbracket  \in F.
\]
\end{Proposition}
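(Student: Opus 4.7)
The proposition makes three distinct assertions: that the pointwise application of a basic operation to elements of $S_F$ remains in $S_F$; that such an operation respects the relation $\equiv_F$; and that satisfaction of a basic relation respects $\equiv_F$ modulo $F$. The plan is to dispatch them in this order using two core ingredients --- the homomorphism property of the transition maps $f_{yz}$ and the closure properties of the filter $F$ (under finite intersection and under upward passage within $\mathsf{Up}(\mathbb{X})$) --- while keeping careful track of the varying domains $V_a$ of sequences in $S_F$ by repeatedly intersecting with members of $F$.

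For the first assertion I would set $c(x) \coloneqq g^{M_x}(a_1(x), \dots, a_n(x))$ for $x \in V_a$ and check the compatibility condition $f_{yz}(c(y)) = c(z)$ whenever $y \leq z$ in $V_a$. This is an immediate calculation: the homomorphism property of $f_{yz}$ allows it to be pushed through $g$, and the compatibility of each $a_m$ converts the result to $c(z)$. This places $c$ in $S_{V_a} \subseteq S_F$, and the analogous statement for the tuple $\vec b$ follows identically.

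For the second and third assertions, the key preliminary observation is that the brackets $\llbracket a = b \rrbracket$ and $\llbracket R(\vec a) \rrbracket$ are upsets of $\mathbb{X}$. Indeed, if $x \in V_a \cap V_b$ satisfies $a(x) = b(x)$ and $x \leq y$, then applying $f_{xy}$ to both sides and invoking compatibility yields $a(y) = b(y)$; the same reasoning, using preservation of atomic formulas by $f_{xy}$, handles $R$. Granted this, assuming $a_m \equiv_F b_m$ for every $m \leq n$, form
\[
W \coloneqq V_a \cap V_b \cap \bigcap_{m \leq n} \llbracket a_m = b_m \rrbracket,
\]
which lies in $F$ by finite-intersection closure. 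On $W$ the tuples agree pointwise, whence the two pointwise operations evaluate equally and the two relations hold at the same points; upward closure of $F$ within $\mathsf{Up}(\mathbb{X})$ then yields the desired memberships, with the converse direction for $R$ obtained by symmetry.

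I do not anticipate a genuine obstacle: the argument amounts to the familiar \L os-style verification that the pointwise structure on $S_F$ descends to the quotient by $\equiv_F$, adapted to a setting in which sequences have variable domains drawn from $F$. The single nontrivial checkpoint is verifying that each bracket of an atomic formula is an upset, so as to be eligible for membership in $F$; this is precisely the point at which the hypothesis that every $f_{yz}$ is a homomorphism (rather than merely a function) enters the proof.
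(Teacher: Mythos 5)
Your proposal is correct: all three verifications go through, and you rightly identify the one non-routine point, namely that $\llbracket a = b \rrbracket$ and $\llbracket R(\vec{a}\?) \rrbracket$ are upsets of $\mathbb{X}$ (via the homomorphism property of the $f_{xy}$), which is what makes membership in $F$ meaningful and lets upward closure of $F$ in $\mathsf{Up}(\mathbb{X})$ finish each step. The paper offers no proof, declaring the proposition a routine exercise, and your argument is precisely the intended \L os-style verification adapted to variable domains, so there is nothing to compare beyond noting agreement.
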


In view of Proposition \ref{Prop: Prime products are well defined}, the following structure is well defined:

\begin{law}
The \emph{filter product} $\prod_{x \in X}M_x / F$ is the structure with universe $S_F / {\equiv_F}$ where
\benroman
\item  the basic $n$-ary operations $g$ are defined as
\[
g(a_1/ {\equiv_{F}}, \dots, a_n/ {\equiv_{F}}) \coloneqq g^{\prod_{x \in V_a}M_x}(a_1{\upharpoonright_{V_a}}, \dots, a_n{\upharpoonright_{V_a}}) / {\equiv_{F}}
\]
for $V_a \coloneqq V_{a_1} \cap \dots \cap V_{a_n}$; 
\item the basic $n$-ary relations $R$ are defined as
\begin{align*}
\langle a_1 / {\equiv_{F}}, \dots, a_n / {\equiv_{F}} \rangle \in R \Longleftrightarrow  \llbracket R(a_1, \dots, a_n) \rrbracket  \in F.
\end{align*}
\eroman
When each $M_x$ is the same structure $M$, we say that $M^X / F$ is a \emph{filter power} of $M$.
\end{law}

Typical examples of filter products include reduced products.

\begin{exa}[\textsf{Reduced products}]\label{Exa : reduced products}
Recall from Remark \ref{Rem: filters on X} that the filters over a set $X$ coincide with the  filters over the poset $\mathsf{id}(X)$. We will show that the reduced product $M$ of a family $Y = \{ M_x \mid x \in X \}$ of similar structures induced by a filter $F$ over $X$ is isomorphic to the filter product of $Y$, viewed as an ordered system indexed by the wellfounded forest $\mathsf{id}(X)$, induced by the same filter $F$ over $\mathsf{id}(X)$.

To this end, let $\mathsf{D}(Y, F)$ be the ordered system comprising the family of structures $\{ \prod_{x \in V}M_x \mid V \in F \}$ indexed by the poset $\langle F; \supseteq \rangle$ and the canonical projections $f_{V,W} \colon \prod_{x \in V} M_x \to \prod_{x \in W}M_x$ for each $V, W \in F$ with $W \subseteq V$. As $F$ is closed under binary intersections, $\mathsf{D}(Y, F)$ is a direct system. Furthermore, its direct limit coincides with the filter product of $Y$ induced by $F$. This is because $Y$ is indexed by $\mathsf{id}(X)$ and, therefore,
\[
S_F = \bigcup \{ \prod_{x \in V} M_x \mid V \in F \}
\]
and for every $a, b \in S_F$,
\[
{a} \equiv_F {b} \, \, \text{  if and only if  }\, \, f_{V_a, V_a\cap V_b}(a) = f_{V_b, V_a\cap V_b}(b).
\]

As the direct limit of $\mathsf{D}(Y, F)$ is isomorphic to the reduced product of $Y$ induced by $F$ (see, e.g., \cite[p.\ 109]{Eklof77}), we conclude that so is the filter product of $Y$ induced by $F$.\qed
\end{exa}

The following construction plays the role of an ultraproduct in positive model theory.

\begin{law}
A filter product $\prod_{x \in X} M_x / F$ is said to be a \emph{prime product} when $F$ is prime. If, in addition, each $M_x$ is the same structure $M$, we say that $M^X / F$ is a \emph{prime power} of $M$.
\end{law}

\begin{exa}[\textsf{Ultraproduct}]%\label{Exa : reduced products}
Recall from Remark \ref{Rem: filters on X} that the ultrafilters over a set $X$ coincide with the prime filters over the poset $\mathsf{id}(X)$. Therefore, Example \ref{Exa : reduced products} shows that the ultraproduct of a family $Y = \{ M_x \mid x \in X \}$ of similar structures induced by an ultrafilter $U$ over $X$ is isomorphic to the prime product of $Y$, viewed as an ordered system indexed by $\mathsf{id}(X)$, induced by the prime filter $U$ over $\mathsf{id}(X)$.
\qed
\end{exa}

Let $\{ M_x \mid x \in X \}$ be an ordered system indexed by a linearly ordered poset $\mathbb{X}$. It is easy to construct a nonempty well ordered subposet $\mathbb{Y}$ of $\mathbb{X}$ that is cofinal in $\mathbb{X}$, i.e., such that
\[
\text{for every }x \in X \text{ there exists }y \in Y \text{ such that }x \leq y.
\]
\noindent Furthermore, when viewed as an ordered system, $\{ M_y \mid y \in Y \}$ has the same direct limit as $\{ M_x \mid x \in X \}$. Because of this, when discussing ordered systems indexed by a linearly ordered poset $\mathbb{X}$, we will restrict our attention to the case where $\mathbb{X}$ is well ordered. Accordingly, by a \emph{chain of structures} we understand an ordered system $\{ M_x \mid x \in X \}$ indexed by a well ordered poset $\mathbb{X}$. A simple example of a prime product is the direct limit of a chain of structures, as we proceed to illustrate.

\begin{exa}[\textsf{Limits of chains}]\label{Exa : limits of chains}
Let $\{ M_x \mid x \in X \}$ be a chain of structures indexed by $\mathbb{X}$. Since $\mathbb{X}$ is nonempty and linearly ordered, $F \coloneqq \{ {\uparrow}x \mid x \in X \}$ is a filter over $\mathbb{X}$. Moreover, the direct limit of $\{ M_x \mid x \in X \}$ is isomorphic to the prime product $\prod_{x \in X}M_x / F$.
\qed
\end{exa}

For every class $\mathsf{K}$ of similar structures let
\begin{align*}
\PPR(\mathsf{K}) &\coloneqq \text{the class of structures isomorphic to a reduced product of members of }\mathsf{K};\\
\PPU(\mathsf{K}) &\coloneqq \text{the class of structures isomorphic to a ultraproduct of members of }\mathsf{K};\\
\PPF(\mathsf{K}) &\coloneqq \text{the class of structures isomorphic to a filter product of members of }\mathsf{K};\\
\PPp(\mathsf{K}) &\coloneqq \text{the class of structures isomorphic to a prime product of members of }\mathsf{K};\\
\Lim(\mathsf{K}) &\coloneqq \text{the class of structures isomorphic to the direct limit of a chain of members of  }\mathsf{K}.
\end{align*}
The proof of the following technical observation is contained in the Appendix.

\begin{Proposition}\label{Prop : ultraproducts of chains}
For every class $\mathsf{K}$ of similar structures,
\[
\PPR\Lim(\mathsf{K}) \subseteq \PPF(\mathsf{K}) \, \, \text{ and } \, \, \PPU\Lim(\mathsf{K}) \subseteq \PPp(\mathsf{K}).
\] 
\end{Proposition}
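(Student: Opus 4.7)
To handle both inclusions uniformly, I would begin with $N \in \PPR\Lim(\mathsf{K})$, namely $N \cong \prod_{i \in I} N_i / F$ where $F$ is a filter over a set $I$ and each $N_i$ is the direct limit of a chain $\{M^i_x : x \in X_i\}$ of members of $\mathsf{K}$ indexed by a well-ordered poset $\mathbb{X}_i$. By Example \ref{Exa : limits of chains}, each $N_i$ is isomorphic to the prime power $\prod_{x \in X_i} M^i_x / F_i$ with $F_i = \{\uparrow x : x \in X_i\}$. The inclusion $\PPU\Lim(\mathsf{K}) \subseteq \PPp(\mathsf{K})$ will then be the special case in which $F$ is an ultrafilter.

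The plan is to realize $N$ as a single filter product over the disjoint union
\[
\mathbb{X} := \bigsqcup_{i \in I} \mathbb{X}_i,
\]
which is a wellfounded forest because any of its principal downsets is contained in one of the $\mathbb{X}_i$ and hence is well ordered. The individual chains assemble into an ordered system $\{M_{(i,x)} \mid (i,x) \in X\}$ of members of $\mathsf{K}$, with transition maps inherited chain by chain. I would then put
\[
F' := \{V \in \mathsf{Up}(\mathbb{X}) \mid \{i \in I : V \cap X_i \neq \emptyset\} \in F\}.
\]
A routine check shows $F'$ is a filter on $\mathbb{X}$; the key point is that in a well-ordered chain every nonempty upset is principal, so any two nonempty upsets intersect. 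Since the map $V \mapsto \{i : V \cap X_i \neq \emptyset\}$ commutes with finite unions, primality of $F$ transfers to $F'$, so the second inclusion will follow from the first once the appropriate isomorphism is in place.

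The main step, and the one I expect to require the heaviest bookkeeping, is producing an isomorphism
\[
\Phi \colon \prod_{(i,x) \in X} M_{(i,x)} / F' \longrightarrow \prod_{i \in I} N_i / F.
\]
Given $[a]_{\equiv_{F'}}$ with $a \in S_V$ for some $V \in F'$, the restriction $a{\upharpoonright_{V \cap X_i}}$ (when nonempty) is a compatible sequence on a principal upset of $\mathbb{X}_i$ and, by Example \ref{Exa : limits of chains}, represents an element $n_i \in N_i$; for the remaining $i$ I would fix $n_i \in N_i$ by choice. Set $\Phi([a]_{\equiv_{F'}}) := [(n_i)_{i \in I}]_{\equiv_F}$. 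The crucial verification is the equivalence
\[
\llbracket a = b \rrbracket \in F' \iff \{i \in I : n_i = n'_i\} \in F,
\]
which, again via Example \ref{Exa : limits of chains}, reduces to the standard direct-limit fact that two compatible sequences represent the same element of $N_i$ exactly when they agree at some common point of $\mathbb{X}_i$. From this equivalence both well-definedness and injectivity of $\Phi$ follow; surjectivity is obtained by gluing prime-power representatives of each $n_i$ into a single compatible sequence on a member of $F'$, and the preservation of basic operations and relations is a parallel application of Proposition \ref{Prop: Prime products are well defined} for $F$ and $F'$.
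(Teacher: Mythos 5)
Your proposal is correct and takes essentially the same route as the paper's appendix proof: the same disjoint-union forest $\mathbb{X}$, the identical induced filter $F' = \{V \in \mathsf{Up}(\mathbb{X}) \mid \{i : V \cap X_i \neq \emptyset\} \in F\}$, primality transferred because $V \mapsto \{i : V \cap X_i \neq \emptyset\}$ commutes with finite unions, and an isomorphism computed pointwise through direct-limit representatives. The only cosmetic differences are that you orient the isomorphism from the filter product to the reduced product of limits (the paper defines the map $\hat{g}$ in the opposite direction and proves it is a surjective embedding) and that you detail the general filter case, obtaining the ultrafilter case by specialization, whereas the paper details the ultrafilter case and declares the other analogous.
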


The importance of prime products derives from the following observation:

\begin{Positive Los Theorem}\label{Positive Los Theorem}
Let $\prod_{x \in X}M_x / F$ be a prime product. For every positive formula $\varphi(v_1, \dots, v_n)$ and $a_1, \dots, a_n \in S_F$,
\[
\prod_{x \in X}M_x / F \vDash \varphi(a_1 / {\equiv}_F, \dots, a_n / {\equiv}_F) \, \, \text{  if and only if  }\, \, \llbracket \varphi(a_1, \dots, a_n) \rrbracket \in F.
\]
Consequently, a positive sentence holds in a structure $M$  if and only if  it holds in some (equiv.\ every) prime power of $M$.
\end{Positive Los Theorem}

\begin{proof}
We recall that positive formulas are preserved by homomorphisms. Therefore, the assumption that $a_1, \dots, a_n \in S_F$ guarantees that $\llbracket \psi(a_1, \dots, a_n) \rrbracket$ is an upset of $\mathbb{X}$, for every positive formula $\psi$. We will use this fact without further notice.

We reason by induction on the construction of $\varphi$. In the base case, $\varphi$ is an atomic formula and the result holds by the definition of a prime product. The case where $\varphi = \psi_1 \land \psi_2$ follows from the inductive hypothesis and the fact that $F$ is a filter over $\mathbb{X}$. The case where $\varphi = \psi_1 \lor \psi_2$ follows from the inductive hypothesis and the fact that $F$ is prime. It only remains to consider the case where $\varphi = \exists w\? \psi(w, v_1, \dots, v_n)$. By the induction hypothesis we have
\[
\prod_{x \in X}M_x / F \vDash \varphi(a_1 / {\equiv}_F, \dots, a_n / {\equiv}_F) \, \, \text{  if and only if  }\, \, \text{there exists }b \in S_F \text{ s.t. }\llbracket \psi(b, a_1, \dots, a_n) \rrbracket \in F.
\]
Therefore, it only remains to prove that 
\[
\llbracket \exists w \?  \psi(w, a_1, \dots, a_n) \rrbracket \in F \, \, \text{  if and only if  }\, \, \text{there exists }b \in S_F \text{ s.t. }\llbracket \psi(b, a_1, \dots, a_n) \rrbracket \in F.
\]
For the sake of readability, we will write $E \coloneqq \llbracket \exists w \?  \psi(w, a_1, \dots, a_n) \rrbracket$. Since $E$ is an upset of $\mathbb{X}$ and $\llbracket \psi(b, a_1, \dots, a_n) \rrbracket \subseteq E$ for every $b \in S_F$, the implication from  right to left in the above display is straightforward. To prove the other implication, suppose that $E \in F$ and let $Y$ be the set of minimal elements of $E$. For every $y \in Y \subseteq E$ there exists $b(y) \in M_y$ such that
\begin{equation}\label{Eq : Positive Los 1}
M_y \vDash \psi(b(y), a_1(y), \dots, a_n(y)).
\end{equation}
As $\mathbb{X}$ is a wellfounded forest, for each $x \in E$ there exists a unique $y_x \in Y$ such that $y_x \leq x$. Therefore, we can define an element $b \in \prod_{x \in E}M_x$ as
\[
b(x) \coloneqq f_{y_x x}(b(y_x))
\]
for each $x \in E$. As $E \in F$, we have $b \in S_E \subseteq S_F$. Furthermore, from Condition (\ref{Eq : Positive Los 1}) and the fact that positive formulas are preserved by homomorphisms it follows
\[
M_x \vDash \psi(b(x), a_1(x), \dots, a_n(x)) \text{ for every }x \in E.
\]
Consequently, $E \subseteq \llbracket \psi(b, a_1, \dots, a_n) \rrbracket$. Since $E \in F$ and $\llbracket \psi(b, a_1, \dots, a_n) \rrbracket$ is an upset of $\mathbb{X}$, we conclude that $\llbracket \psi(b, a_1, \dots, a_n) \rrbracket \in F$ as desired. 
\end{proof}

\begin{Remark}
The proof of the case of the existential quantifier in the Positive \L os Theorem reveals why prime products have been defined for systems of structures indexed by wellfounded forests (as opposed to arbitrary posets).

We remark that the assumption that forests are wellfounded is necessary, as shown by the following example. For each integer $n$, let $M_n$ be the structure with universe $\mathbb{Z}$ whose language consists of a unary predicate $P$ interpreted as $\{ m \in \mathbb{Z} \mid m \leq n \}$. Let also $\{ M_n \mid n \in \mathbb{Z} \}$ be the ordered system indexed by $\mathbb{Z}$ whose homomorphisms the identity function $i \colon \mathbb{Z} \to \mathbb{Z}$. Furthermore, consider the prime filter $F \coloneqq \{ \mathbb{Z} \}$ over $\mathbb{Z}$. Lastly, let $\prod_{n \in \mathbb{Z}} M_n / F$ be the structure obtained from these ingredients using the instructions in the definition of a filter product. We have
\[
\llbracket \exists v \? P(v) \rrbracket = \mathbb{Z} \in F,
\]
while the interpretation of $P$ in $\prod_{n \in \mathbb{Z}} M_n / F$ is $\emptyset$ and, therefore, $\prod_{n \in \mathbb{Z}} M_n / F \nvDash \exists v \? P(v)$.
\qed
\end{Remark}

Since the notion of consequence is central to logic, the implications between positive formulas play also a fundamental role in positive model theory \cite{PoizatYeshkeyev}.

\begin{law}
 A formula $\varphi$ is said to be
\benroman
\item \emph{basic h-inductive} when there are two positive formulas $\psi_1$ and $\psi_2$ such that
    \[
   \varphi =  \forall v_1, \dots, v_n (\psi_1 \to \psi_2);
    \]
    \item \emph{h-inductive} when it is a conjunction of basic h-inductive formulas.
    \eroman
A set of h-inductive sentences will be called an \emph{h-inductive theory}.
\end{law}

As a consequence of the Positive \L os Theorem, we obtain the following:

\begin{Proposition}\label{Prop : prime products preserve h-inductive}
H-inductive sentences persist in prime products: if $\varphi$ is an h-inductive sentence and $\prod_{x \in X} M_x / F$ a prime product such that $M_x \vDash \varphi$ for every $x \in X$, then $\prod_{x \in X} M_x / F \vDash \varphi$.
\end{Proposition}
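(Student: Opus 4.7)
My plan is to reduce the statement to a single basic h-inductive sentence and then apply the Positive \L os Theorem (Theorem \ref{Positive Los Theorem}) twice: once to translate the antecedent into a set of indices in $F$, and once to translate the consequent back into satisfaction in the prime product. Since an h-inductive formula is a conjunction of basic h-inductive ones and $\vDash$ distributes over conjunction, it suffices to treat a single basic h-inductive sentence $\varphi = \forall v_1 \dots v_n (\psi_1 \to \psi_2)$ with $\psi_1, \psi_2$ positive. Assuming $M_x \vDash \varphi$ for every $x \in X$, I will fix a tuple $a_1/{\equiv_F}, \dots, a_n/{\equiv_F}$ from the universe of $\prod_{x \in X} M_x / F$ with representatives $a_1, \dots, a_n \in S_F$, suppose that $\prod_{x \in X} M_x / F \vDash \psi_1(a_1/{\equiv_F}, \dots, a_n/{\equiv_F})$, and aim to derive satisfaction of $\psi_2$.

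By the Positive \L os Theorem, the hypothesis becomes $\llbracket \psi_1(a_1, \dots, a_n) \rrbracket \in F$. For each $x$ in this set one has $M_x \vDash \psi_1(a_1(x), \dots, a_n(x))$, so the assumption $M_x \vDash \varphi$ forces $M_x \vDash \psi_2(a_1(x), \dots, a_n(x))$. Hence
\[
\llbracket \psi_1(a_1, \dots, a_n) \rrbracket \subseteq \llbracket \psi_2(a_1, \dots, a_n) \rrbracket.
\]
Positivity of $\psi_2$ ensures that the right-hand side is an upset of $\mathbb{X}$, and the upward closure of $F$ in $\mathsf{Up}(\mathbb{X})$ then places it in $F$. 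A second appeal to the Positive \L os Theorem converts this back into $\prod_{x \in X} M_x / F \vDash \psi_2(a_1/{\equiv_F}, \dots, a_n/{\equiv_F})$, completing the inductive step for $\varphi$.

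I do not expect a genuine obstacle: the heavy lifting has already been carried out inside the Positive \L os Theorem. The one point that requires care is to observe that $\llbracket \psi_2(a_1, \dots, a_n) \rrbracket$ really is an upset of $\mathbb{X}$ (which uses positivity of $\psi_2$ together with the coherence condition built into the definition of $S_F$, exactly as noted at the start of the proof of Theorem \ref{Positive Los Theorem}), so that the filter axiom for $F$ on $\mathsf{Up}(\mathbb{X})$ may be applied. Notably, primeness of $F$ is not invoked directly in this argument; it enters only via the Positive \L os Theorem, where it is needed to dispatch the disjunction case.
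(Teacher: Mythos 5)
Your proof is correct and is essentially the paper's own argument: both reduce to a single basic h-inductive sentence $\forall \vec{v}\,(\psi_1 \to \psi_2)$, invoke the Positive \L os Theorem to move between satisfaction in the prime product and membership of $\llbracket \psi_i(\vec{a}\?) \rrbracket$ in $F$, and exploit that $\llbracket \psi_2(\vec{a}\?) \rrbracket$ is an upset together with the upward closure of $F$ in $\mathsf{Up}(\mathbb{X})$. The only cosmetic difference is that the paper phrases the step by contraposition (from $\llbracket \psi_1 \rrbracket \in F$ and $\llbracket \psi_2 \rrbracket \notin F$ it extracts an index $x$ with $M_x \vDash \psi_1 \land \lnot\psi_2$), whereas you argue directly via $\llbracket \psi_1 \rrbracket \subseteq \llbracket \psi_2 \rrbracket$; your closing remark that primeness enters only through the \L os Theorem is also accurate.
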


\noindent \textit{Proof.}
It suffices to prove the statement for the case where $\varphi$ is basic h-inductive. Then there are two positive formulas $\psi_1(v_1, \dots, v_n)$ and $\psi_2(v_1, \dots, v_n)$ such that $\varphi = \forall v_1, \dots, v_n\? (\psi_1 \to \psi_2)$. We will reason by contraposition. Suppose that $\prod_{x \in X} M_x / F \nvDash \varphi$. By the Positive \L os Theorem there are $a_1, \dots, a_n \in S_F$ such that
\[
\llbracket \psi_1(a_1, \dots, a_n) \rrbracket \in F \, \, \text{ and } \, \, \llbracket \psi_2(a_1, \dots, a_n) \rrbracket \notin F.
\]
Since $\llbracket \psi_2(a_1, \dots, a_n) \rrbracket$ is an upset of $\mathbb{X}$, this means that 
\[
\llbracket \psi_1(a_1, \dots, a_n) \rrbracket \nsubseteq \llbracket \psi_2(a_1, \dots, a_n) \rrbracket.
\]
Consequently, there exists $x \in V_{a_1} \cap \dots \cap V_{a_n}$ such that
\[
\pushQED{\qed} M_x \vDash \psi_1(a_1(x), \dots, a_n(x)) \land \lnot \psi_2(a_1(x), \dots, a_n(x)).\qedhere \popQED
\] 

\begin{law}
  A class of similar structures is said to be \emph{h-inductive} when it is closed under direct limits of chains of structures.
\end{law}

H-inductive theories and classes are related as follows (see, e.g., \cite[p.\ 108]{PoizatYeshkeyev}):

\begin{Theorem}\label{Thm : inductive classes are models of inductive theories}
  The class of models of an h-inductive theory is h-inductive.\
  Conversely, every elementary h-inductive class is axiomatized
  by an h-inductive theory.
\end{Theorem}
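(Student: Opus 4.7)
\emph{Forward direction.} This follows directly from the machinery already built. Given an h-inductive theory $T$ and a chain $\{M_x \mid x \in X\}$ of models of $T$, Example~\ref{Exa : limits of chains} identifies its direct limit with the prime product $\prod_{x \in X} M_x / F$ where $F = \{{\uparrow}x \mid x \in X\}$, and Proposition~\ref{Prop : prime products preserve h-inductive} guarantees that h-inductive sentences persist in prime products. Hence the direct limit models $T$, so $\Mod(T)$ is closed under direct limits of chains.

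\emph{Converse, plan.} Suppose $\KKK$ is elementary with first-order axiomatization $\Sigma$ and closed under direct limits of chains, and let $T$ consist of all h-inductive sentences satisfied in every member of $\KKK$. Since $\KKK \subseteq \Mod(T)$ is immediate, the task reduces to showing $\Mod(T) \subseteq \KKK$; fix $M \models T$. The plan is to emulate the classical Chang--\L o\'s--Suszko sandwich by constructing an interleaved $\omega$-chain
\[
M = M_0 \xrightarrow{h_0} N_0 \xrightarrow{g_0} M_1 \xrightarrow{h_1} N_1 \xrightarrow{g_1} M_2 \to \cdots
\]
of homomorphisms with each $N_i \in \KKK$ and each composite $g_i \circ h_i$ coinciding with an elementary embedding $M_i \preccurlyeq M_{i+1}$. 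Its direct limit $L$ is then simultaneously the limit of the cofinal subchain $(N_i)_{i < \omega}$ drawn from $\KKK$---so $L \in \KKK$ by the closure hypothesis---and the union of the elementary chain $(M_i)_{i < \omega}$, giving $M \preccurlyeq L$ by the elementary chain lemma. Since $\KKK$ is elementary and $M \equiv L \in \KKK$, it follows that $M \in \KKK$.

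\emph{Construction and main obstacle.} Two compactness arguments sustain the induction. The first---producing, from any $M' \models T$, a homomorphism into some $N' \in \KKK$---rests on the finite satisfiability of $\Sigma$ together with the positive diagram of $M'$: an inconsistency would yield a positive $\psi(\bar v)$ with $M' \models \exists \bar v\,\psi$ while $\forall \bar v(\psi \to \bot) \in T$, a contradiction. The second step---producing, from $h_i\colon M_i \to N_i$, an elementary extension $M_{i+1} \succcurlyeq M_i$ and a homomorphism $g_i\colon N_i \to M_{i+1}$ for which $g_i \circ h_i$ agrees with the elementary inclusion $M_i \hookrightarrow M_{i+1}$---is the main obstacle, since positive-existential information about elements of $N_i$ must be witnessed in an elementary extension of $M_i$ even though homomorphisms transport positive formulas only in the forward direction. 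I would resolve this by first replacing $M_i$ with a sufficiently saturated elementary extension (still a model of $T$) in which every positive type over the image of $M_i$ that is consistent with $\Sigma$ is already realized, so that the combined theory $\Th(M_i, M_i) \cup \Delta^+(N_i, N_i) \cup \{c_a = d_{h_i(a)} \mid a \in M_i\}$ becomes finitely satisfiable, whence its model supplies $M_{i+1}$ and $g_i$.
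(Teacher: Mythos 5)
The paper does not actually prove this theorem---it cites \cite[p.~108]{PoizatYeshkeyev}---so your proposal must be measured against the standard argument. Your forward direction is correct and non-circular: Example~\ref{Exa : limits of chains} and Proposition~\ref{Prop : prime products preserve h-inductive} rest only on the Positive \L o\'s Theorem, so invoking them is legitimate. Your converse skeleton (the interleaved sandwich chain, with the limit landing in $\KKK$ via the cofinal subchain of $N_i$'s and $M \preccurlyeq L$ via the elementary chain lemma) is also exactly the classical route, in the spirit of Chang--\L o\'s--Suszko and of the Poizat--Yeshkeyev proof.

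The genuine gap is your resolution of the ``main obstacle'' by saturating $M_i$: this cannot work. For the combined theory $\Th(M_i,M_i)\cup\Delta^+(N_i,N_i)\cup\{c_a=d_{h_i(a)}\}$ to be finitely satisfiable it is necessary (and sufficient) that $h_i$ \emph{reflect} positive formulas, i.e.\ be an immersion: a failure instance is a positive quantifier-free $\psi$ with $N_i \vDash \psi(h_i(\bar a),\bar n)$ while $M_i \vDash \lnot\exists\bar w\,\psi(\bar a,\bar w)$, and since every elementary extension of $M_i$, however saturated, satisfies exactly the same formulas at the fixed tuple $\bar a$, passing to $M_i' \succcurlyeq M_i$ changes nothing. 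Your phrase ``every positive type over the image of $M_i$ consistent with $\Sigma$'' conflates consistency with $\Sigma$ (which the type of $\bar n$ enjoys, being realized in $N_i \in \KKK$) with finite satisfiability in $M_i$---only the latter permits realization in an elementary extension of $M_i$. Concretely, with $\KKK$ the (elementary, h-inductive) class of linear orders, $M_i$ a two-element chain and $h_i$ the collapse onto a one-element order, no $g_i$ with $g_i\circ h_i$ equal to an elementary inclusion can exist, yet your Step~1 does not rule out this choice of $h_i$. The correct repair is to strengthen Step~1 so that $h_i$ is an immersion: apply compactness to $\Sigma\cup\Delta^+(M_i)\cup\{\lnot\varphi(\bar a)\mid \varphi \text{ positive},\ M_i\nvDash\varphi(\bar a)\}$; an inconsistency yields finitely many positive diagram facts $\delta(\bar a,\bar b)$ and positive $\varphi_1(\bar a),\dots,\varphi_k(\bar a)$ with $\Sigma\vdash\forall\bar u\,(\exists\bar w\,\delta\to\varphi_1\lor\cdots\lor\varphi_k)$, a basic h-inductive sentence belonging to $T$ and hence true in $M_i$, a contradiction. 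Note that this is precisely where h-inductive sentences with nontrivial positive consequents enter; your proposal only ever exploits the sentences $\forall\bar v(\psi\to\bot)$, a telltale sign it could not suffice (it would equally ``prove'' axiomatizability by positive and negated-positive sentences alone, which is false). Once $h_i$ is an immersion, your Step~2 theory is finitely satisfiable outright and no saturation is needed anywhere.
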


\noindent Notably, h-inductive elementaty classes can also be characterized in terms of prime products.

\begin{Corollary}\label{Cor : h-inductive classes via prime products}
A class of similar structures is elementary and h-inductive  if and only if  it is closed under isomorphisms, prime products, and ultraroots.
\end{Corollary}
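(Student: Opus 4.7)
The forward direction is almost immediate from the tools already developed. Assuming $\mathsf{K}$ is elementary and h-inductive, Theorem~\ref{Thm : inductive classes are models of inductive theories} supplies an h-inductive axiomatization, Proposition~\ref{Prop : prime products preserve h-inductive} gives closure under prime products, isomorphism-closure is automatic, and closure under ultraroots follows because diagonal embeddings $M \hookrightarrow M^I/U$ are (classically) elementary, so $M \equiv M^I/U$, and an elementary class closed under isomorphism is closed under elementary equivalence.

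For the backward direction, the plan is to extract h-inductivity and elementarity separately. For h-inductivity, I would invoke Example~\ref{Exa : limits of chains}, which realizes every direct limit of a chain of structures as a prime product indexed by a wellfounded forest, and combine this with the assumed closure under prime products and isomorphism. For elementarity, I would first observe that every ultraproduct is a particular prime product (ultrafilters on $X$ are prime filters of $\mathsf{Up}(\mathsf{id}(X))$), so $\mathsf{K}$ is automatically closed under ultraproducts; then I would appeal to the classical characterization of elementary classes, namely that any class closed under isomorphism, ultraproducts, and ultraroots is axiomatizable by a first-order theory.

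The only nontrivial step is the invocation of this classical characterization. If a direct reference is not preferred, the standard proof goes through compactness together with the Keisler--Shelah Isomorphism Theorem: for any $N \models \Th(\mathsf{K})$, an ultraproduct of $\mathsf{K}$-models indexed by finite fragments of $\Th(N)$ yields some $M \in \mathsf{K}$ with $M \equiv N$; Keisler--Shelah then produces a set $I$ and ultrafilters $U, V$ with $M^{I}/U \cong N^{I}/V$, after which ultraproduct closure places $N^{I}/V$ in $\mathsf{K}$ and ultraroot closure places $N$ in $\mathsf{K}$. No new ideas beyond the examples and propositions already stated, plus this standard fact, are needed.
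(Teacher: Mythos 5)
Your proposal is correct and follows essentially the same route as the paper's own proof: the converse direction via the Frayne--Morel--Scott characterization of elementary classes combined with the observations that ultraproducts and direct limits of chains are special cases of prime products (Examples \ref{Exa : reduced products} and \ref{Exa : limits of chains}), and the forward direction via the h-inductive axiomatization from Theorem \ref{Thm : inductive classes are models of inductive theories} together with Proposition \ref{Prop : prime products preserve h-inductive}, with isomorphisms and ultraroots handled by elementarity. The only divergence is cosmetic: you sketch a compactness-plus-Keisler--Shelah proof of the classical characterization, which the paper simply cites.
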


\begin{proof}
We recall that a class of similar structures is elementary  if and only if  it is closed under isomorphisms, ultraproducts, and ultraroots \cite[Thm\ 2.13]{FrMoSc62}. Furthermore, such a class is h-inductive  if and only if  it is closed under direct limits of chains of structures. Since ultraproducts and  direct limits of chains of structures are special cases of prime products (see Examples \ref{Exa : reduced products} and \ref{Exa : limits of chains}), it follows that every class of similar closed under isomorphisms, prime products, and ultraroots is h-inductive and elementary. Conversely, every h-inductive elementary class is closed under isomorphisms and ultraroots because it is elementary and under prime products by Proposition \ref{Prop : prime products preserve h-inductive}.
\end{proof}

\section{Positive equivalence}

\begin{law}
The \emph{positive theory} of a structure $M$ is the set of positive sentences valid in $M$. Two structures are \emph{positively equivalent} when they have the same positive theory.
\end{law}

The Keisler Isomorphism Theorem states that, under the CGH, two structures are elementarily equivalent  if and only if  they have isomorphic ultrapowers \cite[Thm.\ 2.4]{Ke61}. As shown by Shelah, the result holds also without GCH \cite[p.\ 244]{She71}. The aim of this section is to establish the following:\footnote{It is an open problem whether one can dispense with GCH in the Positive Keisler Isomorphism Theorem too.}

\begin{Positive Keisler Isomorphism Theorem}\label{Positive Keisler Isomorphism Theorem}
Under \emph{GCH}, two structures are positively equivalent  if and only if  they have isomorphic prime powers of ultrapowers.
\end{Positive Keisler Isomorphism Theorem}

Before proving this result, we shall explain why it appears to be more complicated than the original Keisler Isomorphism Theorem. More precisely, the next examples show that
\benroman
\item A prime power of an ultrapower of a structure $M$ need not be isomorphic to any filter power of $M$;
\item Two positively equivalent structures need not have isomorphic prime powers.
\eroman

\begin{exa}\label{Exa : Q expended with constants}
Let $\mathbb{Q}$ be the poset of rational numbers with a constant for each element. Moreover, let $\mathbb{Q}_u$ be an ultrapower of $\mathbb{Q}$ containing an element $m$ such that $q < m$ for every $q \in \mathbb{Q}$. Then consider the endomorphism $f \colon \mathbb{Q}_u \to \mathbb{Q}_u$ defined by the rule
\[
f(p)=
\begin{cases}
m & \text{if $q < p$ for every $q \in \mathbb{Q}$;}\\
p & \text{otherwise.}
\end{cases}
\]
Lastly, let $\mathbb{Q}^\ast$ be the direct limit of the chain of structures
\[
\mathbb{Q}_u \xrightarrow{f} \mathbb{Q}_u \xrightarrow{f} \mathbb{Q}_u\xrightarrow{f} \cdots
\]

Recall from Example \ref{Exa : limits of chains} that direct limits of chains of structures are prime powers. Therefore, $\mathbb{Q}^\ast$ is a prime power of an ultrapower of $\mathbb{Q}$ by construction. We will prove that $\mathbb{Q}^\ast$ is not isomorphic to any filter power of $\mathbb{Q}$. To this end, observe that $\mathbb{Q}^\ast$ has a greatest element, namely, the image of $m$ in the direct limit. Therefore, it suffices to show that every nontrivial filter power of $\mathbb{Q}$ lacks a greatest element. Consider a nontrivial filter power  $\mathbb{Q}^X / F$ of $\mathbb{Q}$ and let $a \in S_F$. We need to find $b \in S_F$ such that $a / 	{\equiv}_{F} < b / 	{\equiv}_{F}$. Let $M$ be the set of minimal elements of $V_a$ and for each $x \in M$ let $q_x \in \mathbb{Q}$ be such that $a(x) < q_x$. Since $\mathbb{X}$ is a wellfounded forest, for each $y \in V_a$ there exists a unique $x_y \in M$ such that $x_y \leq y$. Because of this, the unique element $b \in \prod_{y \in V_a}M_y$ defined for every $y \in V_a$ as $b(y) \coloneqq f_{x_y y}(q_{x_y})$ belongs to $S_{V_a}$. Observe that the only endomorphism of $\mathbb{Q}$ is the identity function $i \colon \mathbb{Q} \to \mathbb{Q}$ because the language contains a constant for each element of $\mathbb{Q}$. Then for every $y \in V_a$,
\[
a(y) = f_{x_y y}(a(x_y)) = i(a(x_y)) = a(x_y) < q_{x_y} = i(q_{x_y}) = f_{x_y y}(q_{x_y}) = b(y).
\]
Therefore,
\[
\llbracket a = b \rrbracket = \emptyset \notin F \, \, \text{ and } \, \, \llbracket a \leq b \rrbracket = V_a \in F,
\]
where $\emptyset \notin F$ because $\mathbb{Q}^X / F$ is nontrivial. By the definition of a filter power we conclude that $a/ 	{\equiv}_{F} < b / 	{\equiv}_{F}$.
\qed
\end{exa}

The next example relies on the following observation (see, e.g., \cite[Thm.\ 3.1]{MorRafWan19PSC}):

\begin{Proposition}\label{Prop : positive theories and homomorphisms}
A structure $M$ satisfies the positive theory of a structure $N$  if and only if  there exist an ultrapower $M_u$ of $M$ and a homomorphism $f \colon N \to M_u$.
\end{Proposition}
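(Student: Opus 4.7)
The reverse implication is the easy one: if $f\colon N \to M_u$ is a homomorphism into an ultrapower of $M$ and $\varphi$ is a positive sentence in the positive theory of $N$, then $N \vDash \varphi$ implies $M_u \vDash \varphi$ because positive sentences are preserved under homomorphisms, and $M_u \vDash \varphi$ implies $M \vDash \varphi$ by the classical \L os Theorem. So $M$ satisfies $\Th^+(N)$.

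For the forward implication, the plan is to build the homomorphism via compactness on the positive diagram. Expand the language $L$ to $L_N$ by adding a constant $c_a$ for each $a \in N$ and let
\[
\Delta^+(N) \coloneqq \{ \varphi(c_{a_1}, \dots, c_{a_n}) \mid \varphi \text{ positive}, \, N \vDash \varphi(a_1, \dots, a_n) \}.
\]
The key observation is that $\Th(M) \cup \Delta^+(N)$ is finitely satisfiable: any finite $\{\varphi_1(\bar c), \dots, \varphi_k(\bar c)\} \subseteq \Delta^+(N)$ is witnessed in $N$ by some tuple $\bar a$, whence $N \vDash \exists \bar x\, (\varphi_1 \land \dots \land \varphi_k)(\bar x)$; this last sentence is positive, so by assumption it also holds in $M$, providing a witness tuple in $M$ that interprets the $c_{a_i}$. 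Compactness then yields a model $M' \vDash \Th(M) \cup \Delta^+(N)$, and the map $a \mapsto c_a^{M'}$ is a homomorphism $N \to M'$ because $\Delta^+(N)$ contains the atomic diagram of $N$.

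It remains to upgrade $M'$ to an ultrapower of $M$. Since $M' \equiv M$, the Keisler--Shelah Isomorphism Theorem provides an index set $I$ and an ultrafilter $U$ on $I$ together with an isomorphism $M'^{I}/U \cong M^{I}/U$. Composing the homomorphism $N \to M'$ with the diagonal embedding $M' \hookrightarrow M'^{I}/U$ and the isomorphism yields the desired homomorphism $f \colon N \to M_u$ with $M_u \coloneqq M^{I}/U$ an ultrapower of $M$.

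The single nontrivial step is the jump from an elementarily equivalent model $M'$ to an actual ultrapower of $M$; this is precisely where Keisler--Shelah is invoked. If one preferred to avoid it, the same conclusion can be obtained directly by taking $I$ to be the collection of finite subsets of $\Delta^+(N)$, choosing a witness tuple in $M$ for each $i \in I$, and extending the Fr\'echet-style filter generated by the sets $\{ i \in I \mid \varphi \in i\}$ (for $\varphi \in \Delta^+(N)$) to an ultrafilter $U$ on $I$; \L os's Theorem then shows that the map assembled from the witnesses is a homomorphism $N \to M^{I}/U$.
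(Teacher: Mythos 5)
Your proof is correct, but note that the paper itself does not prove this proposition: it is quoted with a citation to \cite[Thm.~3.1]{MorRafWan19PSC}, and the standard argument for that result is essentially the direct construction you sketch in your closing remark (index by finite subsets of the positive diagram, choose witness tuples in $M$ using the hypothesis that $M$ satisfies the positive existential sentences true in $N$, extend the finite-intersection family $\{\,i \in I \mid \varphi \in i\,\}$ to an ultrafilter, and verify via \L o\'s's Theorem that the assembled map preserves atomic formulas). Your main route --- compactness on $\Th(M) \cup \Delta^{+}(N)$ to get $M' \equiv M$ with a homomorphism $N \to M'$, then Keisler--Shelah to trade $M'$ for an ultrapower of $M$ --- is valid: the diagonal embedding and the isomorphism are homomorphisms, so the composite $N \to M' \to M'^{I}/U \cong M^{I}/U$ does the job. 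But it invokes Shelah's theorem, a far deeper result than needed, which is somewhat awkward in a paper whose very subject is Keisler-type theorems (and whose main theorem is only proved under GCH); the direct construction needs nothing beyond \L o\'s's Theorem and the ultrafilter lemma, so it is the preferable proof and the one matching the cited source. Two small points of hygiene: the homomorphism you obtain lands in the $L$-reduct of $M'$, and $\Delta^{+}(N)$ contains the \emph{positive} atomic diagram (atomic sentences true in $N$, without negations) --- which is exactly, and only, what the homomorphism property requires, so calling it ``the atomic diagram'' overstates what is used.
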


\begin{exa}\label{Exa : positive equivalent without prime powers}
We will show that two positively equivalent structures need not have isomorphic prime powers. To this end, let $\mathbb{Q}$ be the structure defined in Example \ref{Exa : Q expended with constants} and $\mathbb{Q}^+$ the structure obtained by adding a greatest element $m$ to $\mathbb{Q}$. Clearly, $\mathbb{Q}$ is a substructure of $\mathbb{Q}^+$. Furthermore, $\mathbb{Q}^+$ is a substructure of the ultrapower $\mathbb{Q}_u$ of $\mathbb{Q}$ considered in Example \ref{Exa : Q expended with constants}. Therefore, $\mathbb{Q}$ and $\mathbb{Q}^+$ are positively equivalent by Proposition \ref{Prop : positive theories and homomorphisms}.

It only remains to prove that $\mathbb{Q}$ and $\mathbb{Q}^+$ do not have isomorphic prime powers. On the one hand, every filter power of $\mathbb{Q}$ induced by a proper filter (and, therefore, every prime power of $\mathbb{Q}$) lacks a greatest element, as shown in Example \ref{Exa : Q expended with constants}. On the other hand, every prime power $\mathbb{Q}^{+X} / F$ of $\mathbb{Q}^+$ has a greatest element, as we proceed to explain. Since the only endomorphism of $\mathbb{Q}^+$ is the identity function, the constant function $\hat{m} \colon X \to \mathbb{Q}^+$ with value $m$ is an element of $S_X \subseteq S_F$. Furthermore, for every $a \in S_F$ we have $\llbracket a \leq \hat{m} \rrbracket = V_a \in F$. By the definition of a filter product, we conclude that $a / {\equiv}_F \leq \hat{m}/ {\equiv}_F$. Hence, $\hat{m}/ {\equiv}_F$ is the greatest element of $\mathbb{Q}^{+X} / F$ as desired.
\qed
\end{exa}

The proof of the Positive Keisler Isomorphism Theorem relies on the next concept:

\begin{law}
A structure $M$ is said to be 
\benroman
\item \emph{positively} $\kappa$\emph{-saturated} for a cardinal $\kappa$ when for every $\vec{a} \in M^\lambda$ with $\lambda < \kappa$ and every set of positive formulas $p(x_1, \dots, x_n)$ with parameters in $\vec{a}$,
\[
\text{if }p \text{ is finitely satisfiable in }M \text{, then it is realized in }M;
\]
\item \emph{positively saturated} when it is positively $|M|$-saturated.
\eroman
\end{law}
\noindent While every saturated model is positively saturated, the converse need not hold in general (for instance, when viewed as a poset, the extended real number line is positively saturated, but not saturated). 

The proof of the next observation is a straightforward adaptation of the standard argument showing that saturated models are universal (see, e.g., \cite[Thm.\ 5.1.14]{ChKe90}).

\begin{Proposition}\label{Prop : saturation implies universality}
Let $M$ be a positively saturated structure. If $M$ satisfies the positive theory of a structure $N$ with $\vert N \vert \leq \vert M \vert$, there exists a homomorphism $f \colon N \to M$. 
\end{Proposition}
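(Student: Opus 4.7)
My plan is to mimic the standard proof that a saturated model universally embeds any structure with the same theory and cardinality at most its own, adjusting every step to stay inside the positive fragment. Let $\kappa \coloneqq \vert N \vert \leq \vert M \vert$ and enumerate $N = \{ n_\alpha \mid \alpha < \kappa \}$. I will construct, by transfinite recursion on $\alpha < \kappa$, a sequence $(b_\alpha)_{\alpha < \kappa}$ in $M$ maintaining the invariant that for every positive $\varphi$ and every $\gamma_1, \dots, \gamma_k < \alpha$,
\[
N \vDash \varphi(n_{\gamma_1}, \dots, n_{\gamma_k}) \text{ implies } M \vDash \varphi(b_{\gamma_1}, \dots, b_{\gamma_k}).
\]
Once this is done, setting $f(n_\alpha) \coloneqq b_\alpha$ will deliver the homomorphism, because atomic formulas are positive.

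At stage $\alpha$, having chosen $(b_\gamma)_{\gamma < \alpha}$ satisfying the invariant, I would consider the set of positive formulas
\[
p(x) \coloneqq \{ \varphi(x, b_{\gamma_1}, \dots, b_{\gamma_k}) \mid \varphi \text{ positive},\ \gamma_i < \alpha,\ N \vDash \varphi(n_\alpha, n_{\gamma_1}, \dots, n_{\gamma_k}) \}.
\]
A finite subset of $p$ collects to a single positive $\varphi(x, b_{\gamma_1}, \dots, b_{\gamma_k})$ with $N \vDash \exists x \, \varphi(x, n_{\gamma_1}, \dots, n_{\gamma_k})$. Since $\exists$ preserves positivity, the existential closure is again a positive formula, so the invariant transports this to $M \vDash \exists x \, \varphi(x, b_{\gamma_1}, \dots, b_{\gamma_k})$, and any witness satisfies the chosen finite subset. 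As the parameter set of $p$ has cardinality at most $\vert \alpha \vert + \aleph_0 < \vert M \vert$, positive $\vert M \vert$-saturation realizes $p$ in $M$, yielding a suitable $b_\alpha$ that extends the invariant to $\alpha + 1$.

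Finally, $f(n_\alpha) \coloneqq b_\alpha$ is well defined (equality is a positive atom, so the invariant prevents any clash under repeated enumeration) and a homomorphism: preservation of basic relations follows from the invariant applied to atoms $R(v_1, \dots, v_k)$, and preservation of basic operations follows from its application to equations $g(v_1, \dots, v_k) = v_0$. The only step where positivity plays more than a cosmetic role is the finite-satisfiability claim; there one relies essentially on the closure of the positive fragment under $\land$ and $\exists$, which transports the witness in $N$ to one in $M$ via the partial preservation already built.
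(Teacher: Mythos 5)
Your proof is correct and is precisely what the paper intends: the paper offers no written proof of this proposition, saying only that it is a straightforward adaptation of the standard argument that saturated models are universal (Chang--Keisler, Thm.~5.1.14), and your transfinite construction---realizing at stage $\alpha$ the positive type of $n_\alpha$ over the previously chosen $b_\gamma$'s, with finite satisfiability transported from $N$ to $M$ via existential closure and the inductive invariant---is exactly that adaptation. One slip worth fixing: your bound $\vert\alpha\vert + \aleph_0 < \vert M\vert$ fails when $\vert M\vert \leq \aleph_0$, but it is also not needed, since the paper's definition of positive $\vert M\vert$-saturation restricts only the number of \emph{parameters} (not of formulas in the type), and here that number is $\vert\alpha\vert < \vert N\vert \leq \vert M\vert$, so the saturation hypothesis applies exactly as you use it.
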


 We will also make use of the following result on classical saturation.

\begin{Theorem}\label{Thm : Keisler Kunen saturation}
The following hold for a structure $M$:
\benroman
\item\label{item : GCH saturation 1} For every cardinal $\kappa$ there exists a $\kappa$-saturated ultrapower of $M$;
\item\label{item : GCH saturation 2} Under GCH, if $M$ is infinite, it has arbitrarily large saturated ultrapowers.
\eroman
\end{Theorem}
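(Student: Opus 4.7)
The plan is to derive both parts from the classical theory of good ultrafilters, so that the proof essentially reduces to invoking standard results in first-order model theory rather than anything requiring the positive framework. I would begin by recalling the Keisler--Kunen theorem: for every infinite cardinal $\kappa$, the index set $I = \kappa$ (or any set of sufficiently large cardinality) carries a countably incomplete $\kappa^{+}$-good ultrafilter $U$. The key model-theoretic consequence is that, for any structure $M$ in a language of cardinality $\leq \kappa$, the ultrapower $M^{I}/U$ is then $\kappa^{+}$-saturated.

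For part (i), given a cardinal $\kappa$, I would apply the above with $\kappa$ in place of $\kappa^{+}$: pick a countably incomplete $\kappa$-good ultrafilter $U$ on a sufficiently large index set $I$ and conclude that $M^{I}/U$ is $\kappa$-saturated. If one prefers a route that is more self-contained, the same $\kappa$-saturated ultrapower can be produced by iterating ordinary ultrapowers along a transfinite chain of length $\kappa$: at successor stages one realizes a finitely satisfiable type of size $< \kappa$ by passing to a suitable ultrapower, and at limit stages one takes direct limits, which up to isomorphism are again ultrapowers of $M$.

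For part (ii), I would assume GCH, fix an infinite $M$, and let $\mu$ be an arbitrary target cardinal. Choose $\kappa \geq \max\{\mu, |M|\}$ and apply part (i) to obtain a $\kappa^{+}$-saturated ultrapower $N = M^{\kappa}/U$. Its cardinality satisfies $|N| \leq |M|^{\kappa}$, which under GCH equals $2^{\kappa} = \kappa^{+}$; on the other hand, $\kappa^{+}$-saturation of the infinite structure $N$ forces $|N| \geq \kappa^{+}$. Hence $|N| = \kappa^{+}$, and $N$ is a saturated ultrapower of $M$ of cardinality $\geq \mu$, as required.

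The only delicate ingredient is the existence of $\kappa$-good countably incomplete ultrafilters, together with the theorem that their ultrapowers are $\kappa$-saturated; this will be the hard part if one insists on a self-contained development. Since these are well-known combinatorial results due to Keisler and refined by Kunen (see, e.g., Chang--Keisler, \emph{Model Theory}, Ch.~6), I would simply cite them rather than reproduce them, as the remainder of the argument is a clean application of cardinal arithmetic under GCH and requires nothing from the positive framework developed above.
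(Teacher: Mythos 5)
Your main argument is correct and coincides with the paper's own proof, which is pure citation: part (\ref{item : GCH saturation 1}) is obtained from Keisler's theorem that ultrapowers by countably incomplete $\kappa^{+}$-good ultrafilters are $\kappa^{+}$-saturated \cite[Thm.\ 2.1]{Keisler64IM} combined with Kunen's ZFC existence theorem for such ultrafilters \cite[Thm.\ 3.2]{Kunen72TAMS}, and part (\ref{item : GCH saturation 2}) is obtained from the proof of \cite[Cor.\ 2.3]{Keisler64IM}, which is exactly the GCH cardinal-arithmetic computation you spell out ($\vert N \vert \leq \vert M \vert^{\kappa} = \kappa^{+} \leq \vert N \vert$). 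Two caveats. First, in part (ii) you should choose $\kappa \geq \max\{\mu, \vert M \vert, \vert L \vert\}$: the saturation theorem for good ultrafilters requires the language to have at most $\kappa$ symbols, and $\vert L \vert$ is not bounded by $\vert M \vert$ in general, so omitting it breaks both the saturation claim and the bound $\vert M \vert^{\kappa} \leq \kappa^{+}$ is unaffected but the $\kappa^{+}$-saturation is. Second, your parenthetical ``more self-contained'' alternative for part (i) does not work as stated: at successor stages iterated ultrapowers are indeed ultrapowers (via the Fubini product of ultrafilters), but at limit stages the direct limit of a chain of ultrapowers of $M$ is a \emph{limit} ultrapower, which need not be isomorphic to any ultrapower of $M$; there is no canonical ultrafilter product of infinitely many ultrafilters that represents the limit. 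Indeed, the fact that direct limits of ultrapowers escape the class of ultrapowers (and even of filter powers, cf.\ Example \ref{Exa : Q expended with constants}) is precisely the phenomenon that forces the paper to work with $\PPp\PPU$ rather than $\PPU$, via Proposition \ref{Prop : ultraproducts of chains}. So the iteration remark should be dropped and the good-ultrafilter route retained as the actual proof.
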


\begin{proof}
Condition (\ref{item : GCH saturation 1}) follows from \cite[Thm.\ 2.1]{Keisler64IM} and \cite[Thm.\ 3.2]{Kunen72TAMS}. For Condition (\ref{item : GCH saturation 2}), see the proof of \cite[Cor.\ 2.3]{Keisler64IM}.
\end{proof}

\begin{Corollary}\label{Cor : saturated hom}
For every cardinal $\kappa$, if $M_u$ is an ultrapower of $M$ and $f \colon N \to M_u$ a homomorphism, there exists a $\kappa$-saturated ultrapower $M^\ast$ of $M$ with a homomorphism $g \colon N \to M^\ast$.
\end{Corollary}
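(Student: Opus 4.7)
The plan is to apply Theorem \ref{Thm : Keisler Kunen saturation}(\ref{item : GCH saturation 1}) to the ultrapower $M_u$ (rather than to $M$ directly) and then exploit the fact that an ultrapower of an ultrapower is itself an ultrapower. Specifically, Theorem \ref{Thm : Keisler Kunen saturation}(\ref{item : GCH saturation 1}) furnishes a $\kappa$-saturated ultrapower $M^\ast = (M_u)^J/V$ for some set $J$ and ultrafilter $V$ on $J$. The canonical diagonal map $d \colon M_u \to M^\ast$ is elementary by \L o\'s's theorem and, in particular, a homomorphism, so $g \coloneqq d \circ f \colon N \to M^\ast$ is a homomorphism.

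It remains to recognise $M^\ast$ as an ultrapower of $M$ itself. Writing $M_u = M^I/U$, this is the classical observation that
\[
(M^I/U)^J / V \cong M^{I \times J} / (U \otimes V),
\]
where $U \otimes V$ denotes the Fubini ultrafilter on $I \times J$ defined by $A \in U \otimes V$ iff $\{ j \in J \mid \{ i \in I \mid (i,j) \in A \} \in U \} \in V$. The isomorphism is induced by reindexing iterated sequences as sequences on the product $I \times J$, and checking that it is a well-defined bijection preserving basic operations and relations is a routine application of \L o\'s's theorem to the two ultrapower constructions.

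There is no serious obstacle: the argument simply combines the existence of $\kappa$-saturated ultrapowers guaranteed by Theorem \ref{Thm : Keisler Kunen saturation}(\ref{item : GCH saturation 1}) with the classical closure of the class of ultrapowers under iteration. The only conceptual point worth highlighting is that one must saturate the ultrapower $M_u$ (so as to keep track of $f$) rather than saturate $M$ from scratch, since the latter would not give any way to produce a homomorphism from $N$.
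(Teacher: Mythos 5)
Your proof is correct and follows essentially the same route as the paper's: saturate the ultrapower $M_u$ via Theorem \ref{Thm : Keisler Kunen saturation}(\ref{item : GCH saturation 1}), invoke the closure of ultrapowers under iteration to see $M^\ast$ as an ultrapower of $M$, and compose $f$ with the embedding of $M_u$ into $M^\ast$. The only difference is that you make explicit (via the Fubini product $U \otimes V$) what the paper compresses into the remark that ultrapowers of ultrapowers are ultrapowers.
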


\begin{proof}
By Theorem \ref{Thm : Keisler Kunen saturation}(\ref{item : GCH saturation 1}) there exists a $\kappa$-saturated ultrapower $M^\ast$ of $M_u$.\ As $M_u$ is an ultrapower of $M$ and ultrapowers of ultrapowers are still ultrapowers, we may assume that $M^\ast$ is an ultrapower of $M$. Furthermore, as $M_u$ embeds into $M^\ast$, we can view $f$ as a homomorphism from $N$ to $M^\ast$.
\end{proof}

The Positive Keisler Isomorphism Theorem is a consequence of the next observation:

\begin{Theorem} \label{thm:old}
Two structures $M_1$ and $M_2$ are positively equivalent  if and only if  there exists
\[
N \in \PPp\PPU(M_1) \cap \PPp\PPU(M_2).
\]
In addition, if each $M_i$ is positively saturated and either finite or of size $\geq \vert L \vert$, we can take
\[
N \in \PPp(M_1) \cap \PPp(M_2).
\]
\end{Theorem}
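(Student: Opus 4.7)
The $(\Leftarrow)$ direction is a direct application of the two \L os theorems. Suppose $N \isom \prod_{x \in X} K_x / F$ belongs to $\PPp\PPU(M_1)$, so each $K_x$ is an ultrapower of $M_1$ and $F$ is a prime filter. For a positive sentence $\varphi$, the classical \L os theorem gives that each $K_x$ is elementarily equivalent to $M_1$; hence $\llbracket \varphi \rrbracket$ equals $X$ if $M_1 \vDash \varphi$ and equals $\emptyset$ otherwise. Since $F$ is proper, the Positive \L os Theorem then yields $N \vDash \varphi$ if and only if $M_1 \vDash \varphi$. The symmetric reasoning with $\PPp\PPU(M_2)$ in place of $\PPp\PPU(M_1)$ shows that $M_1$ and $M_2$ are positively equivalent.

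For the $(\Rightarrow)$ direction I plan a back-and-forth construction. Assuming positive equivalence, I build by induction on $k < \omega$ a chain
\[
N_0 \xrightarrow{h_0} N_1 \xrightarrow{h_1} N_2 \xrightarrow{h_2} \cdots
\]
starting from $N_0 = M_1$, with each $N_{2k}$ an ultrapower of $M_1$ and each $N_{2k+1}$ an ultrapower of $M_2$. Given $N_k$, let $M$ denote whichever of $M_1, M_2$ is supposed to have $N_{k+1}$ as an ultrapower. Since ultrapowers preserve first-order (hence positive) theories, $N_k$ is positively equivalent to $M$, so by Proposition \ref{Prop : positive theories and homomorphisms} there exist an ultrapower $M_u$ of $M$ and a homomorphism $N_k \to M_u$; take these as $N_{k+1}$ and $h_k$. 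Set $N := \varinjlim_{k < \omega} N_k$. By Example \ref{Exa : limits of chains}, $N$ is (isomorphic to) a prime product, so the structure $N$ is already interesting on its own. The key move is that $N$ equally arises as the direct limit of the cofinal even-indexed subchain (with transitions $h_{2k+1} \circ h_{2k}$), which is a chain of ultrapowers of $M_1$; a second appeal to Example \ref{Exa : limits of chains} places $N$ in $\PPp\PPU(M_1)$. The odd-indexed cofinal subchain symmetrically places $N$ in $\PPp\PPU(M_2)$.

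For the addendum, the stronger hypothesis lets us bypass the ultrapowers. Assuming moreover that the cardinalities match -- which they will in the intended application, where the $M_i$ are saturated ultrapowers of matching size produced by Theorem \ref{Thm : Keisler Kunen saturation} -- Proposition \ref{Prop : saturation implies universality} supplies homomorphisms $M_1 \to M_2$ and $M_2 \to M_1$ outright, so the back-and-forth can be run with $N_{2k} = M_1$ and $N_{2k+1} = M_2$ throughout. The cofinal-subchain argument then yields $N \in \PPp(M_1) \cap \PPp(M_2)$. The central conceptual step, and the reason ordinary ultraproducts do not suffice, is this cofinal-subchain identification of the mixed direct limit as a prime product over each pure subchain simultaneously; the only real obstacle is to verify that Proposition \ref{Prop : positive theories and homomorphisms} (respectively Proposition \ref{Prop : saturation implies universality}) can be invoked at every step of the back-and-forth, where for the addendum the size hypothesis ``finite or of size $\geq |L|$'' is precisely what is needed to guarantee the applicability of the latter in both directions between $M_1$ and $M_2$.
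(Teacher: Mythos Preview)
Your argument for the first half of the theorem is correct and takes a genuinely different, more elementary route than the paper's. The paper does not run a direct back-and-forth: instead it invokes Proposition~\ref{Prop : main trick} to produce, for each $M_i$, a positively $\omega$-saturated pec model $M_i^\ast \in \Lim\PPU(M_i)$ of $T = \Th^+(M_1) = \Th^+(M_2)$, then uses Proposition~\ref{Prop : elementary equivalence} to conclude that $M_1^\ast$ and $M_2^\ast$ are \emph{elementarily} equivalent, then applies the full Keisler--Shelah Isomorphism Theorem to get a common ultrapower $N$, and finally uses Proposition~\ref{Prop : ultraproducts of chains} to collapse $\PPU\Lim\PPU$ into $\PPp\PPU$. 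Your zig-zag chain needs only Proposition~\ref{Prop : positive theories and homomorphisms} together with the trivial fact that cofinal subchains have the same direct limit; no pec models, no saturation, no Keisler--Shelah, no Proposition~\ref{Prop : ultraproducts of chains}. This is a real simplification. What the paper's heavier route buys is uniformity: the same template, with $\Lim$ in place of $\Lim\PPU$, handles the addendum as well.

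The addendum, however, is not proved by your argument. You explicitly add the hypothesis $\vert M_1 \vert = \vert M_2 \vert$ in order to invoke Proposition~\ref{Prop : saturation implies universality} in both directions, but the theorem does not assume this, and your closing claim---that the hypothesis ``finite or of size $\geq \vert L \vert$'' is what makes Proposition~\ref{Prop : saturation implies universality} applicable both ways---is simply false: that hypothesis says nothing about the relation between $\vert M_1 \vert$ and $\vert M_2 \vert$ (already in the empty language any two nonempty sets are positively equivalent and positively saturated). In the paper that size hypothesis plays a completely different role: inside the second part of Proposition~\ref{Prop : main trick} it licenses a downward L\"owenheim--Skolem step that trims a pec extension of $M_i$ to size $\leq \vert M_i \vert$, after which positive saturation of $M_i$ yields an \emph{endomorphism} of $M_i$ whose $\omega$-iterated limit lies in $\Lim(M_i)$ and is a positively $\omega$-saturated pec model. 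The paper then reruns its Keisler--Shelah argument and applies the inclusion $\PPU\Lim \subseteq \PPp$ of Proposition~\ref{Prop : ultraproducts of chains}. When $\vert M_2 \vert > \vert M_1 \vert$ your back-and-forth simply has no map $M_2 \to M_1$ to use, and there is no evident way to repair this without importing the pec/L\"owenheim--Skolem machinery or some substitute for it.
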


The next proof shows how to derive the Positive Keisler Isomorphism Theorem from the above result.

\begin{proof}
Consider two similar structures $M_1$ and $M_2$. If $M_1$ and $M_2$ have isomorphic prime powers of ultrapowers, then they are positively equivalent by the classical \L os Theorem and its positive version. Conversely, suppose that $M_1$ and $M_2$ are positively equivalent. By Theorem \ref{Thm : Keisler Kunen saturation}, under GCH each $M_i$ has a saturated ultrapower $M_i^\ast$. Furthermore, by the same theorem $M_i^\ast$ can be assumed to be either finite (if $M_i$ is finite) or of size $\geq \vert L \vert$ (if $M_i$ is infinite). Since $M_1^\ast$ and $M_2^\ast$ are also positively equivalent, we can apply Theorem \ref{thm:old} obtaining that there exists $N \in \PPp(M_1) \cap \PPp(M_2)$.
\end{proof}

The rest of the paper is devoted to proving Theorem \ref{thm:old}. To this end, we recall that a map $f \colon M \to N$  between two structures $M$ and $N$ is an \emph{immersion} when for every positive formula $\varphi(v_1, \dots, v_n)$ and $a_1, \dots, a_n \in M$,
\[
M \vDash \varphi(a_1, \dots, a_n) \, \, \text{  if and only if  }\, \, N \vDash \varphi(f(a_1), \dots, f(a_n)).
\]

\begin{law}
A model $M$ of a theory $T$ is said to be \emph{positively existentially closed} (\?\emph{pec}, for short) when every homomorphism from $M$ to a model of $T$ is an immersion.
\end{law}

We rely on the following description of pec models:

\begin{law}
 Let $\varphi(\vec{v}\?)$ be a positive formula and $T$ an h-inductive theory.
  The \emph{resultant} $\Res_T(\varphi)$ of $\varphi$ over $T$
  is the set of positive formulas $\psi(\vec{v}\?)$
  such that $T \vdash  \lnot \exists \vec{v} \?(\varphi \land \psi)$.
\end{law}

\begin{Proposition}[{\cite[Lem.~14]{BenYacPoiz07}}]\label{Prop : how to prove that smth is pec}
  A model $M$ of an h-inductive theory $T$ is pec  if and only if  for each positive formula $\varphi(\vec{v}\?)$ and $\vec{a} \in M$ such that $M \nvDash \varphi(\vec{a})$ there exists $\psi \in \Res_T(\varphi)$ such that $M \vDash \psi(\vec{a})$.
\end{Proposition}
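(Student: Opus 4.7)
The plan is to prove each direction separately, with the easy direction being a straightforward unpacking of definitions and the forward direction using a diagram/compactness argument to construct a witness to the failure of the pec property.

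For the $(\Leftarrow)$ direction, suppose the resultant condition holds, take a homomorphism $h\colon M \to N$ with $N \vDash T$, and show $h$ is an immersion. One direction of preservation is automatic since $\varphi$ is positive and $h$ is a homomorphism. For the reverse direction, I would argue by contraposition: if $M \nvDash \varphi(\vec{a})$, apply the hypothesis to obtain $\psi \in \Res_T(\varphi)$ with $M \vDash \psi(\vec{a})$; then $N \vDash \psi(h(\vec{a}))$ because $h$ is a homomorphism and $\psi$ is positive, while $N \vDash \neg \exists \vec{v}(\varphi \land \psi)$ because $N \vDash T$. Together these force $N \nvDash \varphi(h(\vec{a}))$.

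For the $(\Rightarrow)$ direction, I would proceed by contraposition. Suppose there exist a positive formula $\varphi(\vec{v}\?)$ and a tuple $\vec{a} \in M$ with $M \nvDash \varphi(\vec{a})$ such that no $\psi \in \Res_T(\varphi)$ satisfies $M \vDash \psi(\vec{a})$. Expand the language by adding a constant $c_b$ for each $b \in M$ and consider the theory
\[
T^{\ast} \coloneqq T \cup \mathrm{Diag}^+(M) \cup \{\varphi(\vec{c}_{\vec{a}})\},
\]
where $\mathrm{Diag}^+(M)$ is the set of positive sentences in the expanded language that are true in $M$ when each $c_b$ is interpreted as $b$. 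The goal is to show $T^{\ast}$ is consistent: then any model $N$ of $T^{\ast}$, with $h(b) \coloneqq c_b^{N}$, yields a homomorphism $h\colon M \to N$ with $N \vDash T$ and $N \vDash \varphi(h(\vec{a}))$, witnessing that $h$ is not an immersion and hence that $M$ is not pec.

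To establish consistency, suppose for contradiction that $T^{\ast}$ is inconsistent. By compactness and the closure of positive formulas under conjunction and existential quantification, there is a positive formula $\beta(\vec{w}, \vec{v}\?)$ and a tuple $\vec{b} \in M$ with $M \vDash \beta(\vec{b}, \vec{a})$ such that $T \vdash \neg(\beta(\vec{c}_{\vec{b}}, \vec{c}_{\vec{a}}) \land \varphi(\vec{c}_{\vec{a}}))$. Since the constants $\vec{c}_{\vec{b}}, \vec{c}_{\vec{a}}$ do not occur in $T$, this gives $T \vdash \neg \exists \vec{v}, \vec{w}(\beta(\vec{w}, \vec{v}\?) \land \varphi(\vec{v}\?))$. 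Setting $\psi(\vec{v}\?) \coloneqq \exists \vec{w}\, \beta(\vec{w}, \vec{v}\?)$, we obtain $\psi \in \Res_T(\varphi)$ with $M \vDash \psi(\vec{a})$, contradicting our assumption. The main (very mild) obstacle is bookkeeping: checking that the inconsistent finite fragment can be packaged as a single positive existential formula $\psi$ in the free variables $\vec{v}$ only, so that it lands in the correct resultant set $\Res_T(\varphi)$.
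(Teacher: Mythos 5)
Your proof is correct. Note that the paper itself offers no proof of this proposition: it is imported verbatim as \cite[Lem.~14]{BenYacPoiz07}, so there is nothing internal to compare against; your argument is essentially the standard one from that reference. Both directions are sound: the backward direction is the straightforward unpacking you describe, and the forward direction via the consistency of $T \cup \mathrm{Diag}^+(M) \cup \{\varphi(\vec{c}_{\vec{a}})\}$ is the right move, with the compactness bookkeeping going through exactly as you sketch --- a finite inconsistent fragment conjoins (positive sentences are closed under $\land$) into a single positive $\beta(\vec{w},\vec{v})$, the new constants are generalized away since they do not occur in $T$, and $\psi(\vec{v}) \coloneqq \exists \vec{w}\, \beta(\vec{w},\vec{v})$ lands in $\Res_T(\varphi)$ because $\varphi$ does not contain $\vec{w}$. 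The only edge case worth a sentence in a write-up is when the inconsistent finite fragment uses no diagram sentences at all (so $T \vdash \lnot \varphi(\vec{c}_{\vec{a}})$): there you may take $\psi$ to be a trivially true atomic formula such as $v_1 = v_1$, which is positive in the paper's sense and satisfies $T \vdash \lnot \exists \vec{v}\,(\varphi \land \psi)$. You should also record, as you implicitly do, that the map $b \mapsto c_b^N$ is a homomorphism because $\mathrm{Diag}^+(M)$ contains all atomic facts, and that one passes to the $L$-reduct of $N$ to match the definition of pec.
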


The next two results are instrumental in constructing pec models.

\begin{Proposition}[{\cite[Thm.~1, Lem.\ 12]{BenYacPoiz07}}]\label{Prop : inductive + pec}
The following holds for an h-inductive theory $T$:
\benroman
\item\label{item:indpec:1} For every model $M$ of $T$ there exists a pec model $N$ of $T$ with a homomorphism $f \colon M \to N$;
\item\label{item:indpec:2} The class of pec models of $T$ is h-inductive.
\eroman
\end{Proposition}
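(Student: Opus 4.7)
The plan is to dispatch part~(ii) via a short preservation argument and to prove part~(i) by a transfinite chain construction, whose engine is Proposition~\ref{Prop : how to prove that smth is pec} together with standard compactness.

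For part~(ii), let $\{N_\alpha \mid \alpha < \lambda\}$ be a chain of pec models of $T$, with colimit maps $g_\alpha \colon N_\alpha \to N$ into its direct limit $N$. Theorem~\ref{Thm : inductive classes are models of inductive theories} yields $N \vDash T$. To check that $N$ is pec, suppose $h \colon N \to N'$ is a homomorphism with $N' \vDash T$, that $\varphi(\vec{v}\?)$ is a positive formula, and that $\vec{a} \in N$ satisfies $N' \vDash \varphi(h(\vec{a}))$. Pick $\alpha$ and $\vec{b} \in N_\alpha$ with $g_\alpha(\vec{b}) = \vec{a}$; then $h \circ g_\alpha \colon N_\alpha \to N'$ is a homomorphism from a pec model into a model of $T$, hence an immersion, so $N_\alpha \vDash \varphi(\vec{b})$, and preservation of positive formulas by $g_\alpha$ gives $N \vDash \varphi(\vec{a})$.

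For part~(i), the key extension lemma reads: if $M \vDash T$, $\varphi(\vec{v}\?)$ is positive, and $\vec{a} \in M$ satisfies $M \nvDash \psi(\vec{a})$ for every $\psi \in \Res_T(\varphi)$, then there exist a model $N \vDash T$ and a homomorphism $f \colon M \to N$ with $N \vDash \varphi(f(\vec{a}))$. I would prove it by considering the theory $T \cup \mathrm{Diag}^+(M) \cup \{\varphi(\vec{a})\}$ in the language $L \cup M$, where $\mathrm{Diag}^+(M)$ is the positive atomic diagram of $M$: were it inconsistent, compactness would produce a finite conjunction $\chi(\vec{a}, \vec{b})$ from $\mathrm{Diag}^+(M)$ with $T \vdash \neg \exists \vec{v}\? \exists \vec{w}\?(\chi(\vec{v}, \vec{w}) \land \varphi(\vec{v}))$, placing $\exists \vec{w}\?\chi(\vec{v}, \vec{w})$ inside $\Res_T(\varphi)$ and satisfied at $\vec{a}$ via $\vec{b}$, contradicting the hypothesis on $\vec{a}$.

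With the lemma in hand, one builds a continuous chain $\{M_\alpha \mid \alpha < \kappa\}$ of models of $T$ with $M_0 = M$, for a regular cardinal $\kappa$ large enough to dominate the number of potential witness pairs. At limit stages take direct limits, which remain models of $T$ by Theorem~\ref{Thm : inductive classes are models of inductive theories}; at each successor $\alpha + 1$, pick from a fair enumeration a pending pair $(\varphi, \vec{a})$ with $\vec{a} \in M_\alpha$ witnessing failure of the pec criterion and invoke the lemma to obtain $M_{\alpha+1}$ in which $\varphi$ holds at the image of $\vec{a}$. The final direct limit $N$ is then pec by Proposition~\ref{Prop : how to prove that smth is pec}: any tuple $\vec{a} \in N$ lives in some $M_\alpha$, and when the pair $(\varphi, \vec{a})$ is later scheduled, either some $\psi \in \Res_T(\varphi)$ is already satisfied at $\vec{a}$ and persists to $N$, or $\varphi(\vec{a})$ becomes true in the extension and again persists. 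The principal obstacle is the bookkeeping: choosing $\kappa$ and a fair enumeration so that every pair arising at any stage is considered cofinally, which is routine once $\kappa$ dominates the total number of positive formulas and finite tuples that can ever appear in the chain.
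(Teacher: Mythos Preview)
The paper does not supply its own proof of this proposition: it is stated with a citation to \cite[Thm.~1, Lem.\ 12]{BenYacPoiz07} and left without argument. Your proposal therefore cannot be compared against anything in the paper itself; it stands as a reconstruction of the cited result.

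That reconstruction is correct and follows the standard route. Part~(ii) is exactly the usual preservation argument for pec models under direct limits. For part~(i), your extension lemma and its compactness proof are correct, and the transfinite chain construction works. The only point that deserves a bit more care is the one you flag yourself: to make ``$\kappa$ dominates the total number of positive formulas and finite tuples that can ever appear'' precise, observe that the consistent theory $T \cup \mathrm{Diag}^+(M_\alpha) \cup \{\varphi(\vec{a})\}$ has a model of size at most $|M_\alpha| + |L| + \omega$ by downward L\"owenheim--Skolem, so each successor step can be taken to satisfy $|M_{\alpha+1}| \leq |M_\alpha| + |L| + \omega$. Choosing $\kappa = (|M| + |L| + \omega)^+$ then keeps every $|M_\alpha| < \kappa$, so the total number of pairs over the whole chain is at most $\kappa$, and a standard bookkeeping enumeration of order type $\kappa$ schedules each one cofinally. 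Alternatively, the bookkeeping can be avoided by an $\omega$-iteration in which $M_{n+1}$ resolves \emph{all} failing pairs from $M_n$ at once (itself an internal chain of applications of your lemma); the direct limit of this $\omega$-chain is then pec by the same case analysis you give.
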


\begin{Proposition}
  \label{prop:transfer}
  Let $M$ be a pec model of an h-inductive theory $T$ and $f \colon N \to M$ an immersion.  Then, $N$ is also a pec model of $T$.
\end{Proposition}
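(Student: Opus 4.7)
The plan is to verify the two things that make $N$ a pec model of $T$: first that $N \models T$, and second that $N$ satisfies the criterion of Proposition \ref{Prop : how to prove that smth is pec}. Both steps reduce to transferring information along $f$ in both directions, which is possible because immersions, by definition, preserve positive formulas in both directions.

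First I would show $N \models T$. Since $T$ is h-inductive, it suffices to check that each basic h-inductive sentence $\forall \vec{v}\?(\psi_1 \to \psi_2)$ valid in $M$ is also valid in $N$. Given $\vec{a} \in N$ with $N \vDash \psi_1(\vec{a})$, the immersion property (applied in the forward direction to the positive formula $\psi_1$) yields $M \vDash \psi_1(f(\vec{a}))$, whence $M \vDash \psi_2(f(\vec{a}))$ because $M \models T$, and applying the immersion property in the backward direction to the positive formula $\psi_2$ gives $N \vDash \psi_2(\vec{a})$.

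Next I would check the pec condition via Proposition \ref{Prop : how to prove that smth is pec}. Fix a positive formula $\varphi(\vec{v}\?)$ and $\vec{a} \in N$ with $N \nvDash \varphi(\vec{a})$. Applying the backward direction of the immersion property to $\varphi$ yields $M \nvDash \varphi(f(\vec{a}))$. Since $M$ is pec, Proposition \ref{Prop : how to prove that smth is pec} supplies some $\psi \in \Res_T(\varphi)$ with $M \vDash \psi(f(\vec{a}))$. Finally, applying the backward direction of the immersion property to the positive formula $\psi$ gives $N \vDash \psi(\vec{a})$, as required. Invoking Proposition \ref{Prop : how to prove that smth is pec} once more concludes that $N$ is pec.

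There is essentially no obstacle: both halves of the argument are just the observation that an immersion is a logical equivalence on positive formulas, and the only subtlety is remembering to verify $N \models T$ before invoking the pec characterisation, since Proposition \ref{Prop : how to prove that smth is pec} presupposes this.
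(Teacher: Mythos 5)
Your proof is correct and follows essentially the same route as the paper's: verify $N \vDash T$ using the h-inductivity of $T$ together with the two-way transfer of positive formulas along the immersion, then apply Proposition \ref{Prop : how to prove that smth is pec} exactly as you do. The only difference is that you spell out the $N \vDash T$ step, which the paper asserts in one line; your elaboration of it is accurate.
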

\begin{proof}
Since the theory $T$ is h-inductive, from the assumption that $f$ is an immersion and that $M \vDash T$ it follows that $N \vDash T$. To prove that $N$ is pec, we will use Proposition \ref{Prop : how to prove that smth is pec}. Consider a positive formula $\varphi(\vec{v}\?)$ and $\vec{a} \in N$ such that $N \nvDash \varphi(\vec{a})$. Since $f$ is an immersion and $\varphi$ positive, we obtain $M \nvDash \varphi(f(\vec{a}))$. As $M$ is a pec model of $T$, there exists $\psi(\vec{v}\?) \in \Res_T(\phi)$ such that $M \vDash \psi(f(\vec{a}))$. Since $f$ is an immersion and $\psi$ positive, this yields $N \vDash \psi(\vec{a})$ as desired.
\end{proof}

Lastly, we will rely on the following criterion for elementary equivalence.

\begin{Proposition}\label{Prop : elementary equivalence}
Two positively $\omega$-saturated pec models of an h-inductive theory are elementarily equivalent  if and only if  they have the same positive theory.
\end{Proposition}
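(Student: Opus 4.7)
The forward direction is immediate, as positive sentences are first-order. For the converse, let $T$ be an h-inductive theory and $M_1, M_2$ positively $\omega$-saturated pec models of $T$ sharing the same positive theory. The plan is to build a back-and-forth system of finite partial maps between $M_1$ and $M_2$ that preserve positive types of finite tuples, and then infer elementary equivalence by a short induction on formulas.

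First I would let $I$ be the collection of finite partial maps $\sigma \colon M_1 \rightharpoonup M_2$ such that whenever $\sigma$ sends $\vec{a}$ to $\vec{b}$, the tuples $\vec{a}$ in $M_1$ and $\vec{b}$ in $M_2$ satisfy exactly the same positive formulas. The empty map belongs to $I$ precisely by the hypothesis that $M_1$ and $M_2$ have the same positive theory. To verify the back-and-forth property, given $\sigma \in I$ with $\sigma(\vec{a}) = \vec{b}$ and $c \in M_1$, I would consider
\[
p(v) \coloneqq \{\, \varphi(\vec{b}, v) : \varphi \text{ positive and } M_1 \vDash \varphi(\vec{a}, c)\,\},
\]
show that $p(v)$ is finitely satisfiable in $M_2$ (for any finite sub-conjunction $\psi$, the positive formula $\exists v\, \psi(\vec{v}, v)$ is witnessed by $c$ in $M_1$, and hence transfers to $\vec{b}$ in $M_2$ by the assumption on $\sigma$), and then apply positive $\omega$-saturation of $M_2$ to obtain a realizer $d \in M_2$ of $p$.

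The main obstacle is showing that $\sigma \cup \{(c,d)\} \in I$, i.e., that no ``extra'' positive formula holds of $\vec{b}d$ in $M_2$ that fails of $\vec{a}c$ in $M_1$. This is exactly where the pec hypothesis enters. Suppose $M_2 \vDash \varphi(\vec{b}, d)$ while $M_1 \nvDash \varphi(\vec{a}, c)$ for some positive $\varphi$. Applying Proposition \ref{Prop : how to prove that smth is pec} to the pec model $M_1$ yields $\psi \in \Res_T(\varphi)$ with $M_1 \vDash \psi(\vec{a}, c)$. Then $\psi(\vec{b}, v)$ lies in $p(v)$, so $M_2 \vDash \psi(\vec{b}, d)$, contradicting $T \vdash \neg \exists \vec{v}\,(\varphi \land \psi)$ together with $M_2 \vDash T$. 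The ``forth'' direction is symmetric, using the pec property of $M_2$.

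With $I$ established as a nonempty back-and-forth family, I would conclude by a routine induction on the complexity of a formula $\chi(\vec{v})$: for every $\sigma \in I$ with $\sigma(\vec{a}) = \vec{b}$, $M_1 \vDash \chi(\vec{a})$ iff $M_2 \vDash \chi(\vec{b})$. The atomic case is handled by the same pec-plus-resultant trick as above (atomic formulas being positive); Boolean combinations are immediate; and the quantifier cases are handled directly by the back-and-forth property of $I$. Specialising to the empty map then gives $M_1 \equiv M_2$.
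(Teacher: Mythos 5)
Your proposal is correct. Note, however, that the paper does not actually prove this proposition: it defers to the remark following Proposition 12 of Poizat--Yeshkeyev \cite{PoizatYeshkeyev}, so you have supplied a self-contained argument where the paper has only a citation. Your argument is the expected one, and it holds up: positive $\omega$-saturation gives the realizer $d$ of the positive type $p(v)$ (the finite-parameter restriction in the definition of positive $\omega$-saturation is exactly matched by your use of \emph{finite} partial maps), and the pec hypothesis via Proposition \ref{Prop : how to prove that smth is pec} rules out any ``extra'' positive formula holding of $\vec{b}d$, since a witness $\psi \in \Res_T(\varphi)$ would make $M_2$ satisfy $\exists \vec{v}\,(\varphi \land \psi)$ against $M_2 \vDash T$. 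Both halves of the back-and-forth go through symmetrically as you say. One small simplification: in the closing induction, the atomic case does not need the pec-plus-resultant trick a second time --- it is immediate from the definition of $I$, since members of $I$ preserve \emph{exactly} the same positive formulas in both directions and atomic formulas (including equality, which also gives well-definedness and injectivity of the maps) are positive; the pec argument is needed only once, to establish membership of the extended map in $I$. With that observation, $I$ is a nonempty family of partial isomorphisms with the back-and-forth property, and the conclusion $M_1 \equiv M_2$ follows by the standard argument, as you indicate.
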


\begin{proof}
See the paragraph immediately after the proof of \cite[Prop.\ 12]{PoizatYeshkeyev}.
\end{proof}

Positive equivalence is governed by the following concept.

\begin{law}
Given a structure $M$ with positive theory $T$, we let 
\[
\Th^+(M) \coloneqq T \cup  \{ \lnot \varphi \,(= \varphi \to \bot) \mid \varphi \text{ is a positive sentence and }M \nvDash \varphi \}.
\]
\end{law}

\noindent Notice that $\Th^+(M)$ is an h-inductive theory. Furthermore, two structures $M$ and $N$ are positively equivalent  if and only if  $\Th^+(M) = \Th^+(N)$.

\begin{Proposition}\label{Prop : main trick}
For every structure $M$ there exists a positively $\omega$-saturated pec model $M_\omega$ of $\Th^+(M)$ in $\Lim\PPU(M)$.\ In addition, if $M$ is positively saturated and either finite or such that $\vert L \vert \leq \vert M \vert$, we can take $M_\omega \in \Lim(M)$.
\end{Proposition}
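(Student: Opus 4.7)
The plan is to carry out an alternating-chain construction of length $\omega$. We build a chain
\[
P_0 \to P_1 \to P_2 \to \cdots
\]
in which even-indexed stages $P_{2k}$ are ultrapowers of $M$ and odd-indexed stages $P_{2k+1}$ are pec models of $\Th^+(M)$. The direct limit $M_\omega \coloneqq \lim_n P_n$ will be the desired structure. Indeed, the cofinal subsequence $(P_{2k})_k$ consists of ultrapowers of $M$, so $M_\omega \in \Lim\PPU(M)$; direct limits of chains are prime products (Example \ref{Exa : limits of chains}) and $\Th^+(M)$ is h-inductive, so by Proposition \ref{Prop : prime products preserve h-inductive} we get $M_\omega \vDash \Th^+(M)$; and the cofinal subsequence $(P_{2k+1})_k$ consists of pec models, so by Proposition \ref{Prop : inductive + pec}(\ref{item:indpec:2}), $M_\omega$ is itself pec.

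For the construction, set $P_0 \coloneqq M$. Given an ultrapower $P_{2k}$ of $M$, apply Proposition \ref{Prop : inductive + pec}(\ref{item:indpec:1}) to obtain a pec model $P_{2k+1}$ of $\Th^+(M)$ with a homomorphism $P_{2k} \to P_{2k+1}$; one may moreover arrange $|P_{2k+1}| \leq |P_{2k}| + |L|$. Given a pec model $P_{2k+1}$: it is positively equivalent to $M$ (since it models $\Th^+(M)$), so Proposition \ref{Prop : positive theories and homomorphisms} supplies a homomorphism from $P_{2k+1}$ into some ultrapower of $M$, and Corollary \ref{Cor : saturated hom} with $\kappa \coloneqq |L|^+$ upgrades this to a homomorphism $P_{2k+1} \to P_{2k+2}$ where $P_{2k+2}$ is an $|L|^+$-saturated ultrapower of $M$. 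For the additional clause, take $P_{2k+2} \coloneqq M$ instead: the inductive bound $|P_{2k+1}| \leq |M|$ (valid since $|L| \leq |M|$ or $M$ is finite) together with Proposition \ref{Prop : saturation implies universality} provides the required homomorphism $P_{2k+1} \to M$, and then $M_\omega \in \Lim(M)$.

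The main obstacle is verifying positive $\omega$-saturation of $M_\omega$. Let $p(\vec{x},\vec{a})$ be a positive type with finitely many parameters $\vec{a} \in M_\omega$ that is finitely satisfiable in $M_\omega$; without loss of generality, $\vec{a}$ has a representative in some odd-indexed $P_n$. For each finite $F \subseteq p$, a witness $\vec{b} \in M_\omega$ has a representative in some odd-indexed $P_m$ with $m \geq n$. As $P_m$ is pec and $M_\omega \vDash \Th^+(M)$, the homomorphism $P_m \to M_\omega$ is an immersion, so $P_m \vDash \exists \vec{x}\, F(\vec{x}, \vec{a}_m)$, where $\vec{a}_m$ denotes the image of $\vec{a}$ in $P_m$. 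Because $P_n$ is pec, the homomorphism $P_n \to P_m$ is also an immersion, whence this positive existential sentence reflects back: $P_n \vDash \exists \vec{x}\, F(\vec{x}, \vec{a})$. Hence $p$ is finitely satisfiable in $P_n$. The homomorphism $P_n \to P_{n+1}$ transports finite satisfiability to $P_{n+1}$ by preservation of positive formulas; since $P_{n+1}$ is $|L|^+$-saturated (or, in the additional case, equals the positively saturated $M$), and $p$ has finitely many parameters and at most $|L| + \aleph_0$ formulas, $p$ is realized in $P_{n+1}$, and the image of a realizer in $M_\omega$ realizes $p(\vec{x},\vec{a})$. The role of the pec intermediate stages is precisely to make possible this reflection of positive truth from $M_\omega$ down to $P_n$, a feature that arbitrary homomorphisms between the ultrapowers $P_{2k}$ would not provide.
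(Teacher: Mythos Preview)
Your construction is essentially the paper's: an $\omega$-chain alternating between ultrapowers of $M$ and pec models of $\Th^+(M)$, with the limit inheriting pec-ness from the odd cofinal subchain, membership in $\Lim\PPU(M)$ from the even one, and positive $\omega$-saturation by reflecting finite satisfiability down through an immersion to a pec stage and then pushing it forward into a saturated ultrapower. The paper writes the pec stages as intermediate models $M_n^\ast$ sitting between $M_n$ and $M_{n+1}$ rather than as formal members of the chain, and uses $\omega$-saturation rather than $|L|^+$-saturation, but these are cosmetic.

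Two points need tightening. First, the bound $|P_{2k+1}|\leq |P_{2k}|+|L|$ is not free from Proposition~\ref{Prop : inductive + pec}(\ref{item:indpec:1}); one must pass to a pec model, take an elementary substructure of the right size containing the image via downward L\"owenheim--Skolem, and then invoke Proposition~\ref{prop:transfer} to see that this elementary substructure is still pec. The paper spells this out explicitly in its treatment of the additional clause. Second, your handling of the finite case in the additional clause does not go through: when $M$ is finite but $|L|>|M|$, the bound $|P_{2k+1}|\leq |M|$ fails, so Proposition~\ref{Prop : saturation implies universality} does not supply the map $P_{2k+1}\to M$. The paper sidesteps this by observing that every ultrapower of a finite $M$ is isomorphic to $M$, so the \emph{general} construction (with genuine ultrapowers at the even stages) already lands in $\Lim(M)$ without any size bookkeeping.
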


\begin{proof}
We will define a chain of structures $\{ M_n \mid n \in \omega \}$ such that $M_n \vDash \Th^+(M)$ for each $n \in \omega$. First let $M_0 \coloneqq M$. Then suppose that the chain of structures $\{ M_m \mid m \leq n \}$ has already been defined. Since $M_n$ is a model of the h-inductive theory $\Th^+(M)$, by Proposition \ref{Prop : inductive + pec}(\ref{item:indpec:1}) there exists a pec model $M_n^\ast$ of $\Th^+(M)$ with a homomorphism $g_n \colon M_n \to M_n^\ast$. Furthermore, as $M_n^\ast$ is a model of $\Th^+(M)$, the structures $M_n^\ast$ and $M$ has the same positive theory.  In particular, $M$ satisfies the positive theory of $M_n^\ast$.  Therefore, we can apply Proposition \ref{Prop : positive theories and homomorphisms} obtaining an ultrapower $M_{n +1}$ of $M$ with a homomorphism $h_n \colon M_n^\ast \to M_{n +1}$. Since $M$ is a model of $\Th^+(M)$, so is the ultrapower $M_{n +1}$.\ In addition, the ultrapower $M_{n +1}$ can be assumed to be $\omega$-saturated in virtue of Corollary \ref{Cor : saturated hom}. Then we let $\{ M_m \mid m \leq n + 1 \}$ be the chain of structures obtained by extending $\{ M_m \mid m \leq n \}$ with $M_{n +1}$ and a homomorphism $f_{m \?\?\? n+1} \colon M_m \to M_{n+1}$ for each $m \leq n +1$ defined as follows:
\[
f_{m \?\?\? n+1} \coloneqq 
\begin{cases}
\text{the identity map }i \colon M_{n+1} \to M_{n+1} &\text{if }m = n +1;\\
h_{n}\circ g_{n}\circ f_{m \?\?\? n}& \text{otherwise}.\\
\end{cases}
\]
Lastly, let $M_\omega$ be the direct limit of the chain of structures $\{ M_n \mid n \in \omega \}$. Since $\Th^+(M)$ is an h-inductive theory and $M_n \vDash \Th^+(M)$ for each $n \in \omega$, we can apply Theorem \ref{Thm : inductive classes are models of inductive theories} obtaining that $M_\omega \vDash \Th^+(M)$.

We will prove that $M_\omega$ is a pec model of $\Th^+(M)$ that is  positively $\omega$-saturated and belongs to $\Lim\PPU(M)$. To this end, observe that $M_\omega$ is also the direct limit of
\benroman
\item\label{item : chain : pec} a chain of structures whose members are of the form $M_n^\ast$ for $n \in \omega$, and of
\item\label{item : chain : ultra} a chain of structures whose members are of the form $M_{n + 1}$ for $n \in \omega$.
\eroman

On the one hand, each $M_n^\ast$ is a pec model of $\Th^+(M)$ by construction. Therefore, from Condition (\ref{item : chain : pec}) it follows that $M_\omega$ is the direct limit of a chain of pec models of $\Th^+(M)$. Since the theory $\Th^+(M)$ is h-inductive, from Proposition \ref{Prop : inductive + pec}(\ref{item:indpec:2}) it follows that $M_\omega$ is a pec model of $\Th^+(M)$. On  the other hand, each $M_{n + 1}$ is an ultrapower of $M$. Therefore, from Condition (\ref{item : chain : ultra}) it follows $M_\omega \in \Lim\PPU(M)$.

It only remains to prove that $M_\omega$ is positively $\omega$-saturated. Recall that $M_\omega$ is the direct limit of the chain of structures $\{ M_n \mid n \in \omega \}$. For each $n \in \omega$ we will denote the canonical homomorphism from $M_n$ to $M_\omega$ associated with the direct limit $M_\omega$ by $f_n \colon M_n \to M_\omega$. Then consider $a_1, \dots, a_n \in M_\omega$ and let $p(v_1, \dots, v_n, a_1, \dots, a_n)$ be a set of positive formulas that is finitely satisfiable in $M_\omega$. Since $M_\omega$ is the direct limit of $\{ M_n \mid n \in \omega \}$, there exist $m \in \omega$ and $\hat{a}_1, \dots, \hat{a}_n \in M_{m}$ such that $f_m(\hat{a}_1) = a_1, \dots, f_m(\hat{a}_n) = a_n$. 

By the universal property of the direct limit we have
\[
f_m = f_{m+1} \circ f_{m \?\?\? m+1} = f_{m+1} \circ h_m \circ g_m.
\]
Thus,
\begin{align}\label{Eq : p after the homs}
    \begin{split}
p(v_1, \dots, v_n, a_1, \dots, a_n) &= p(v_1, \dots, v_n, f_m(\hat{a}_1), \dots, f_m(\hat{a}_n))\\
&=p(v_1, \dots, v_n, f_{m+1}(h_m(g_m(\hat{a}_1))), \dots, f_{m+1}(h_m(g_m(\hat{a}_n)))).
\end{split}
\end{align}

Since $M^\ast_m$ is a pec model of $\Th^+(M)$ and $M_{\omega}$ a model of $\Th^+(M)$, the homomorphism $f_{m+1} \circ h_m \colon M^\ast_m \to M_\omega$ is an immersion. Consequently, from the assumption that the set $p(v_1, \dots, v_n, a_1, \dots, a_n)$ is finitely satisfiable in $M_\omega$ and the above display it follows that $p(v_1, \dots, v_n, g_m(\hat{a}_1), \dots, g_m(\hat{a}_n))$ is finitely satisfiable in $M^\ast_m$. As positive formulas are preserved by homomorphisms, the set $p(v_1, \dots, v_n, h_{m}(g_m(\hat{a}_1)), \dots, h_{m}(g_m(\hat{a}_n)))$ is finitely satisfiable in $M_{m+1}$. Since $M_{m+1}$ is $\omega$-saturated, there exist $b_1, \dots, b_n \in M_{m+1}$ such that
\[
M_{m+1} \vDash p(b_1, \dots, b_n, h_{m}(g_m(\hat{a}_1)), \dots, h_{m}(g_m(\hat{a}_n))).
\]
Since positive formulas are preserved by homomorphisms, from Condition (\ref{Eq : p after the homs}) it follows that $M_\omega \vDash p(f_{m+1}(b_1), \dots, f_{m+1}(b_n), a_1, \dots, a_n)$. Hence, we conclude that $M_\omega$ is positively $\omega$-saturated.

To prove the second part of the statement, suppose that $M$ is positively saturated. If $M$ is finite, we have $M_n \cong M$ for each $n \in \omega$ because $M_n$ is an ultrapower of $M$. Therefore, the above construction yields $M_\omega \in \Lim(M)$ as desired. Then we consider the case where $M$ is infinite and $\vert L \vert \leq \vert M \vert$. Since $M$ is a model of $\Th^+(M)$, by Proposition \ref{Prop : inductive + pec}(\ref{item:indpec:1}) there exists a pec model $M^\ast$ of $\Th^+(M)$ with a homomorphism $g \colon M \to M^\ast$. As $M$ is infinite and such that $\vert L \vert \leq \vert M \vert$, by the downward L\"owenheim--Skolem Theorem there exists an elementary substructure $N$ of $M^\ast$ containing $g[M]$ such that $\vert N \vert \leq \vert M \vert$. Since $N$ is an elementary substructure of $M^\ast$ and $M^\ast$ is a pec model of the h-inductive theory $\Th^+(M)$, we can apply Proposition \ref{prop:transfer} obtaining that $N$ is also a pec model of $\Th^+(M)$. Therefore, we may assume without loss of generality that $M^\ast = N$ and, therefore, that $\vert M^\ast \vert \leq \vert M \vert$. As $M^\ast$ is a model of $\Th^+(M)$, we know that $M$ and $M^\ast$ have the same positive theory. Together with $\vert M^\ast \vert \leq \vert M \vert$ and the assumption that $M$ is positively saturated, this implies that there exists a homomorphism $h \colon M^\ast \to M$ by Proposition \ref{Prop : saturation implies universality}. Then we consider the endomorphism $f \coloneqq h \circ g$ of $M$. Then let $M_\omega$ be the direct limit of the chain of strucutres
\[
M \xrightarrow{f} M \xrightarrow{f} M \xrightarrow{f} \cdots
\]
The argument detailed above shows that $M_\omega$ is an $\omega$-saturated pec model of $\Th^+(M)$. Furthermore, $M_\omega \in \Lim(M)$ by construction.
\end{proof}

We are now ready to prove Theorem \ref{thm:old}.

\begin{proof}
From the classical \L os Theorem and its positive version it follows that if there exists $N \in \PPp\PPU(M_1) \cap \PPp\PPU(M_2)$, then $M_1$ and $M_2$ are positively equivalent. Conversely, suppose that $M_1$ and $M_2$ are positively equivalent. Then let
\[
T \coloneqq \Th^+(M_1) = \Th^+(M_2).
\]
By Proposition \ref{Prop : main trick} there are
\[
M_1^\ast \in \Lim\PPU(M_1)\, \, \text{ and } \, \, M_2^\ast \in \Lim\PPU(M_2)
\]
positively $\omega$-saturated pec models of $T$. Furthermore, $M^\ast_1$ and $M^\ast_2$ have the same positive theory by the classical \L os Theorem and its positive version. Consequently, $M_1^\ast$ and $M_2^\ast$ are elementarily equivalent by Proposition \ref{Prop : elementary equivalence}. In view of the Keisler-Shelah Isomorphism Theorem, there exists $N \in \PPU(M_1^\ast) \cap \PPU(M_2^\ast)$. Together with the above display, this yields
\[
N \in \PPU\Lim\PPU(M_1) \cap \PPU\Lim\PPU(M_2).
\]
By Proposition \ref{Prop : ultraproducts of chains} this simplifies to $N \in \PPp\PPU(M_1) \cap \PPp\PPU(M_2)$ as desired.

To prove the second part of the statement, suppose that $M_1$ and $M_2$ are positively saturated (in addition to positively equivalent) and that each $M_i$ is either finite or of size $\geq \vert L \vert$. By Proposition \ref{Prop : main trick} we can take 
\[
M_1^\ast \in \Lim(M_1)\, \, \text{ and } \, \, M_2^\ast \in \Lim(M_2)
\]
and repeat the argument above obtaining that $N \in \PPp(M_1) \cap \PPp(M_2)$.
\end{proof}

\begin{Remark}
In practice, the assumption of GCH in the Positive Keisler Isomorphism Theorem can sometimes be dispensed with. For instance, this is the case for each pair $M_1$ and $M_2$ of positively equivalent structures for which the h-inductive theory
\[
T \coloneqq\Th^+(M_1) = \Th^+(M_2)
\]
is \emph{bounded}, i.e., the size of each pec model of $T$ is $\leq \kappa$ for some cardinal $\kappa$ \cite{PoizatYeshkeyev}. To prove this, observe that by Theorem \ref{Thm : Keisler Kunen saturation}(\ref{item : GCH saturation 1}) each $M_i$ has a $\kappa$-saturated ultrapower $M_i^\ast$. Moreover, by Proposition \ref{Prop : inductive + pec}(\ref{item:indpec:1}) there exists a pec model $N_i$ of $T$ with a homomorphism $g_i \colon M_i^\ast \to N_i$. The assumption that $T$ is bounded guarantees that $\vert N_i \vert \leq \kappa$. Therefore, we can apply Proposition \ref{Prop : saturation implies universality} obtaining a homomorphism $h_i \colon N_i \to M_i^\ast$. Then consider the endomorphism $f_i \coloneqq h_i \circ g_i$ of $M_i^\ast$. Observe that the direct limit $M_i^+$ of the chain of structures
\[
M_i^\ast \xrightarrow{f} M_i^\ast \xrightarrow{f} M_i^\ast \xrightarrow{f} \cdots
\]
is a positively $\omega$-saturated pec model of $T$. From Proposition \ref{Prop : elementary equivalence} it follows that $M_1^+$ and $M_2^+$ are elementarily equivalent. Therefore, they have isomorphic ultrapowers by the Keisler-Shelah Isomorphism Thereom. Consequently, there exists $N \in \PPU\Lim(M_1^\ast) \cap \PPU\Lim(M_2^\ast)$. By Proposition \ref{Prop : ultraproducts of chains} this simplifies to $N \in \PPp(M_1^\ast) \cap \PPp(M_2^\ast)$. Hence, we conclude that $M_1$ and $M_2$ have isomorphic prime powers of ultrapowers. 
\qed
\end{Remark}

\begin{exa}[\textsf{Passive structural completeness}]
We close this paper with an application to algebraic logic.\ A quasivariety is said to be \emph{passively structurally complete} when all its nontrivial members have the same positive theory \cite{MorRafWan19PSC}. From a logical standpoint, the interest of this notion is justified as follows: when a propositional logic $\vdash$ is algebraized by a quasivariety $\mathsf{K}$ in the sense of \cite{BP89}, then all the vacuously admissible rules of $\vdash$ are derivable in $\vdash$  if and only if  $\mathsf{K}$ is passively structurally complete \cite[Fact 2, p.\ 68]{Wro09}. Both the Positive Keisler Isomorphism Theorem and Theorem \ref{thm:old} yield immediate descriptions of passive structurally complete quasivarieties in terms of prime powers.
\qed
\end{exa}

\paragraph{\bfseries Acknowledgements.}
We thank Tom\'{a}\v{s} Jakl and Guillermo Badia for helpful conversations and pointers. The first author was supported by the \textit{Beatriz Galindo} grant BEAGAL\-$18$/$00040$ funded by the Ministry of Science and Innovation of Spain. The second author's work was carried out within the project \emph{Supporting the
internationalization of the Institute of Computer Science of the Czech
Academy of Sciences} (number CZ.02.2.69/0.0/0.0/18\_053/0017594), funded by
the Operational Programme Research, Development and Education of the
Ministry of Education, Youth and Sports of the Czech Republic. The
project is co-funded by the EU.

\section*{Appendix}

\begin{proof}[Proof of Proposition \ref{Prop : ultraproducts of chains}]
We detail the proof of the inclusion $\PPU\Lim(\mathsf{K}) \subseteq \PPp(\mathsf{K})$, since the proof of the other inclusion regarding reduced and filter products is analogous (in fact, simpler). 

Let $\kappa$ be a cardinal and for each $\alpha < \kappa$ let $M_\alpha$ be the direct limit of a chain of structures $\{ M_x \mid x \in X_\alpha \}$ in $\mathsf{K}$ indexed by a well ordered poset $\mathbb{X}_\alpha$. Moreover, let $U$ be a ultrafilter over $\kappa$. We need to prove that
\[
\prod_{\alpha < \kappa} M_\alpha / U \in \PPp(\mathsf{K}).
\]

Without loss of generality, we may assume that the members of $\{\mathbb{X}_\alpha\}_{\alpha < \kappa}$
  are pairwise disjoint.
  Then let $\mathbb{X}$ be the poset obtained as the disjoint union of $\{\mathbb{X}_\alpha \}_{\alpha < \kappa}$ and define
\[
     F \coloneqq \{ V \in \mathsf{Up}(\mathbb{X}) \mid \{ \alpha < \kappa \mid  X_\alpha  \cap V \ne \emptyset \} \in U \}.
\]
\begin{Claim}  
The set $F$ is a prime filter over $\mathbb{X}$.
\end{Claim}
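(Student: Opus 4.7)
The plan is to verify the four conditions: $F$ is nonempty, an upset of $\mathsf{Up}(\mathbb{X})$, closed under binary intersections, and prime (the condition of being proper is a byproduct of the verification). The definition of $F$ converts each condition into a statement about the family $\{ \alpha < \kappa \mid X_\alpha \cap V \neq \emptyset \}$, which we then control using the fact that $U$ is an ultrafilter on $\kappa$.

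Nonemptiness and being an upset are immediate: $X \in F$ because the associated index set is $\kappa \in U$, and if $V \subseteq W$ are upsets then $X_\alpha \cap V \neq \emptyset$ forces $X_\alpha \cap W \neq \emptyset$, so the index set associated with $V$ is contained in that associated with $W$. Primeness is also immediate: for any upsets $V, W$,
\[
\{ \alpha < \kappa \mid X_\alpha \cap (V \cup W) \neq \emptyset \} = \{ \alpha < \kappa \mid X_\alpha \cap V \neq \emptyset \} \cup \{ \alpha < \kappa \mid X_\alpha \cap W \neq \emptyset \},
\]
and $V \cup W \in F$ says the left-hand side is in $U$; since $U$ is an ultrafilter it is prime, so one of the two sets on the right belongs to $U$, i.e., $V \in F$ or $W \in F$. (Properness follows by applying primeness with $V = W = \emptyset$, or directly by noting that $\emptyset$ gives an empty index set.)

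The one step that needs a real argument is closure under binary intersections, and this is where the chain hypothesis on each $\mathbb{X}_\alpha$ enters. Given $V, W \in F$, I claim
\[
\{ \alpha < \kappa \mid X_\alpha \cap V \neq \emptyset \} \cap \{ \alpha < \kappa \mid X_\alpha \cap W \neq \emptyset \} \subseteq \{ \alpha < \kappa \mid X_\alpha \cap V \cap W \neq \emptyset \};
\]
once this is shown, the intersection on the left is in $U$ (as $U$ is a filter), so the superset on the right is in $U$, yielding $V \cap W \in F$. To establish the inclusion, fix $\alpha$ in the left-hand set and pick $v \in X_\alpha \cap V$ and $w \in X_\alpha \cap W$. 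Since $\mathbb{X}_\alpha$ is linearly ordered, either $v \leq w$ or $w \leq v$; in the former case $w \in V$ because $V$ is an upset, so $w \in X_\alpha \cap V \cap W$, and symmetrically in the latter case.

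The main obstacle, conceptually, is seeing why one needs the chain hypothesis: without linearity of $\mathbb{X}_\alpha$, two elements witnessing $X_\alpha \cap V \neq \emptyset$ and $X_\alpha \cap W \neq \emptyset$ need not be comparable, and one could not conclude that some common element lies in $V \cap W$. The disjointness of the $\mathbb{X}_\alpha$ also matters, since it ensures that the upset condition is imposed only fiber-wise and that different $X_\alpha$'s do not spuriously intersect a single $V$.
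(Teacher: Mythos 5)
Your proof is correct and follows essentially the same route as the paper: the same fiberwise translation of each condition into a statement about $\{\alpha < \kappa \mid X_\alpha \cap V \neq \emptyset\}$ belonging to $U$, the same linearity argument (pick $v \in X_\alpha \cap V$, $w \in X_\alpha \cap W$, compare them, and use that $X_\alpha \cap V$ is an upset of $\mathbb{X}_\alpha$) for closure under binary intersections, and the same appeal to the primeness of the ultrafilter $U$ for the primeness of $F$. One small caveat: your parenthetical claim that properness follows ``by applying primeness with $V = W = \emptyset$'' does not work (that instance of the splitting condition yields no contradiction, and in the paper's definition primeness presupposes properness anyway), but your alternative direct argument---$\emptyset$ has empty index set and $\emptyset \notin U$---is exactly the paper's and suffices.
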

 
\begin{proof}[Proof of the Claim]
  Clearly, $F$ is an upset of $\mathsf{Up}(\mathbb{X})$.  Moreover, $F$ is nonempty as it contains $X$. To prove that $F$ is closed under binary intersections, consider $V, W \in F$. Then
\begin{equation}\label{Eq: F closed under meets 1}
\{ \alpha < \kappa \mid  X_\alpha  \cap V  \ne \emptyset \} \cap \{ \alpha < \kappa \mid  X_\alpha  \cap W  \ne \emptyset \} \in U.
\end{equation}
We will show that 
\begin{equation}\label{Eq: F closed under meets 2}
\{ \alpha < \kappa \mid  X_\alpha  \cap V  \ne \emptyset \} \cap \{ \alpha < \kappa \mid  X_\alpha  \cap W  \ne \emptyset \} \subseteq \{ \alpha < \kappa \mid  X_\alpha  \cap V \cap W \ne \emptyset \}.
\end{equation}
To this end, consider $\alpha < \kappa$ such that $X_\alpha  \cap V  \ne \emptyset$ and $X_\alpha  \cap W  \ne \emptyset$. Then there are $v \in X_\alpha \cap V$ and $w \in X_\alpha \cap W$. Since $\mathbb{X}_\alpha$ is linearly ordered, we may assume that $v \leq w$. Since $V \cap X_\alpha$ is an upset of $\mathbb{X}_\alpha$ (because $V$ is an upset of $\mathbb{X}$), this implies $w \in X_\alpha \cap V$. Thus, $w \in X_\alpha \cap V \cap W$ and, therefore, $X_\alpha \cap V \cap W \ne \emptyset$. From Conditions (\ref{Eq: F closed under meets 1}) and (\ref{Eq: F closed under meets 2}) it follows that 
\[
\{ \alpha < \kappa \mid  X_\alpha  \cap V \cap W \ne \emptyset \} \in U.
\]
Thus, $V \cap W \in F$ as desired. We conclude that $F$ is a filter over $\mathbb{X}$.

The definition of $F$ guarantees that $\emptyset \notin F$. Therefore, $F$ is proper. To prove that it is prime, consider $V, W \in \mathsf{Up}(\mathbb{X})$ such that $V \cup W \in F$. Then
\[
\{ \alpha < \kappa \mid X_\alpha \cap V \ne \emptyset \} \cup \{ \alpha < \kappa \mid X_\alpha \cap W \ne \emptyset \} = \{ \alpha < \kappa \mid X_\alpha \cap (V \cup W) \ne \emptyset \} \in U.
\]
Since $U$ is an ultrafilter over $\kappa$, it is also a prime filter over $\kappa$ ordered under the identity relation (Remark \ref{Rem: filters on X}). Therefore, from the above display it follows that
\[
\text{either }\{ \alpha < \kappa \mid X_\alpha \cap V \ne \emptyset \} \in U\text{ or } \{ \alpha < \kappa \mid X_\alpha \cap W \ne \emptyset \} \in U.
\]
This, in turn, implies that either $V$ or $W$ belongs to $F$.
\end{proof} 

  Now, recall that $\mathbb{X}$ is the disjoint union of the well ordered posets $\mathbb{X}_\alpha$. Therefore, the union $\{ M_x \mid x \in X \}$ of the ordered systems $\{ M_x \mid x \in X_\alpha \}$ is a well-defined ordered system indexed by a wellfounded forest. Furthermore, $F$ is a prime filter over $\mathbb{X}$ by the Claim. Consequently, we can form the associated prime product $\prod_{x \in X}M_x / F$. 
  
In order to conclude the proof, it suffices to prove that
\[
\prod_{\alpha < \kappa} M_\alpha / U \cong \prod_{x \in X} M_x / F.
\]  
To this end, observe that for every $\alpha < \kappa$ and $a \in M_\alpha$ there exist $z_{a} \in X_\alpha$ and $m_a \in M_{z_{a}}$ such that $f_{z_a}(m_a) = a$, where $f_{z_a} \colon M_{z_a} \to M_\alpha$ is the canonical homomorphism associated with the direct limit $M_\alpha$. For every each $a \in \prod_{\alpha < \kappa} M_\alpha$, let
\[
Y_{a} \coloneqq \{ x \in X \mid z_{a(\alpha)} \leq x \text{ for some }\alpha < \kappa \}. 
\]
Notice that each $x \in Y_{a}$ there exists exactly one $\alpha < \kappa$ such that $z_{a(\alpha)} \leq x$ because $\mathbb{X}$ is the disjoint union of the various $\mathbb{X}_\alpha$ and these are linearly ordered. We will denote this $\alpha$ by $\beta_{a x}$. Bearing this in mind, let $g(a)$ be the only element of $\prod_{x \in Y_{a}} M_x$ defined for every $x \in Y_{a}$ as
\[
g(a)(x) \coloneqq f_{z_{a(\beta_{ax})}x}(m_{a(\beta_{ax})}).
\]

\begin{Claim}\label{Claim : g(a) in S}
For every $a \in \prod_{\alpha < \kappa} M_\alpha$ we have $g(a) \in S_F$ and $V_{g(a)} = Y_a$.
\end{Claim}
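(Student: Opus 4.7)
My plan is to verify two things: (i) $Y_a \in F$, and (ii) $g(a)$ satisfies the compatibility condition that defines $S_{Y_a}$. Once both are established, $g(a) \in S_{Y_a} \subseteq S_F$ follows immediately, and $V_{g(a)} = Y_a$ holds because $V_{g(a)}$ is simply the domain of $g(a)$ viewed as a function, which is $Y_a$ by construction.

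For (i), I would first observe that $Y_a$ is an upset of $\mathbb{X}$: if $x \in Y_a$, so that $z_{a(\alpha)} \leq x$ for some $\alpha < \kappa$, and $x \leq x'$, then $z_{a(\alpha)} \leq x'$, hence $x' \in Y_a$. To see that $Y_a \in F$, it suffices, by definition of $F$, to check that $\{ \alpha < \kappa \mid X_\alpha \cap Y_a \neq \emptyset \} \in U$. In fact this set equals $\kappa$: for every $\alpha < \kappa$ we have $z_{a(\alpha)} \in X_\alpha$ and $z_{a(\alpha)} \leq z_{a(\alpha)}$, so $z_{a(\alpha)} \in X_\alpha \cap Y_a$. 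Since $\kappa \in U$, we obtain $Y_a \in F$.

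For (ii), consider $y, z \in Y_a$ with $y \leq z$. Because $\mathbb{X}$ is the disjoint union of the $\mathbb{X}_\alpha$, the elements $y$ and $z$ lie in the same component, say $\mathbb{X}_\alpha$, and consequently $\beta_{ay} = \beta_{az} = \alpha$ (this is the unique index such that $z_{a(\alpha)} \in X_\alpha$ lies below $y$, respectively $z$). Thus
\[
g(a)(y) = f_{z_{a(\alpha)}\? y}(m_{a(\alpha)}) \, \, \text{ and } \, \, g(a)(z) = f_{z_{a(\alpha)}\? z}(m_{a(\alpha)}).
\]
Since $z_{a(\alpha)} \leq y \leq z$, the commutativity condition in the definition of an ordered system yields $f_{yz} \circ f_{z_{a(\alpha)}\? y} = f_{z_{a(\alpha)}\? z}$, so $f_{yz}(g(a)(y)) = g(a)(z)$. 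This verifies that $g(a) \in S_{Y_a}$.

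Combining (i) and (ii) gives $g(a) \in S_{Y_a} \subseteq S_F$, and $V_{g(a)} = Y_a$ by the definition of the domain. The step that requires the most care is the second one, and the key conceptual ingredient is the fact that $\mathbb{X}$ is a disjoint union of linearly ordered pieces, which forces $\beta_{ay} = \beta_{az}$ whenever $y \leq z$; this is exactly what ensures that the homomorphisms $f_{z_{a(\alpha)}\? y}$ and $f_{z_{a(\alpha)}\? z}$ can be compared via $f_{yz}$ inside a single chain $\{M_x \mid x \in X_\alpha\}$.
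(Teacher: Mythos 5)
Your proof is correct and follows essentially the same route as the paper's: you show $Y_a \in F$ by noting it is an upset meeting every $X_\alpha$ at $z_{a(\alpha)}$, and you verify the compatibility condition defining $S_{Y_a}$ by establishing $\beta_{ay} = \beta_{az}$ and invoking the composition law $f_{yz} \circ f_{z_{a(\alpha)}\? y} = f_{z_{a(\alpha)}\? z}$ of the ordered system. The only cosmetic difference is that you derive $\beta_{ay} = \beta_{az}$ from $y$ and $z$ lying in the same component, whereas the paper deduces it directly from $z_{a(\beta_{ay})} \leq y \leq z$ and the uniqueness of $\beta_{az}$; these are interchangeable.
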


\noindent \textit{Proof of the Claim.}
It suffices to show that $Y_a \in F$ and that for every $x, y \in Y_a$,
\[
x \leq y \text{ implies }f_{xy}(g(a)(x)) = g(a)(y).
\]
By definition $Y_a$ is an upset of $\mathbb{X}$ which, moreover, is nondisjoint with every $X_\alpha$ because $z_{a(\alpha)} \in X_\alpha \cap Y_a$. Together with the definition of $F$, this yields $Y_a \in F$. Then consider $x, y \in Y_a$ such that $x \leq y$. From $z_{a(\beta_{ax})} \leq  x \leq y$ and the fact that $\beta_{ay}$ is the unique $\alpha < \kappa$ such that $z_{a(\alpha)} \leq y$ it follows $\beta_{ax} = \beta_{ay}$. Therefore, by the definition of $g(a)$ we obtain
\[
g(a)(x) = f_{z_{a(\beta_{ax})} x}(m_{a(\beta_{ax})}) \, \, \text{ and } \, \, g(a)(y) = f_{z_{a(\beta_{ax})} y}(m_{a(\beta_{ax})}).
\]
Furthermore, from $z_{a(\beta_{ax})} \leq  x \leq y$ it follows $f_{z_{a(\beta_{ax})} y} = f_{xy} \circ f_{z_{a(\beta_{ax})} x}$. Together with the above display, this yields
\[
\pushQED{\qed} f_{xy}(g(a)(x)) = f_{xy}(f_{z_{a(\beta_{ax})} x}(m_{a(\beta_{ax})})) = f_{z_{a(\beta_{ax})} y}(m_{a(\beta_{ax})}) = g(a)(y).\qedhere \popQED
\]

Then we turn to prove the following:

\begin{Claim}\label{Claim : embedding well defined} For every atomic formula $\varphi(v_1, \dots, v_n)$ and $a_1, \dots, a_n \in \prod_{\alpha < \kappa} M_\alpha$,
\begin{equation}
\prod_{\alpha < \kappa} M_\alpha / U \vDash \varphi(a_1/ \er{U}, \dots, a_n/ \er{U}) \, \, \text{  if and only if  }\, \, \prod_{x \in X} M_x / F \vDash \varphi(g(a_1)/ \er{F}, \dots, g(a_n)/ \er{F}).
\end{equation}
\end{Claim}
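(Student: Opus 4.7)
My plan is to unfold both sides of the biconditional into membership conditions for the ultrafilter $U$, and then to show the resulting conditions coincide by exploiting the direct limit structure of each $M_\alpha$.

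By the classical \L o\'s Theorem, the left-hand side is equivalent to $A \in U$, where
\[
A \coloneqq \{\alpha < \kappa \mid M_\alpha \vDash \varphi(a_1(\alpha), \dots, a_n(\alpha))\}.
\]
By the definition of the prime product applied to the atomic formula $\varphi$, the right-hand side is equivalent to $V \in F$ for $V \coloneqq \llbracket \varphi(g(a_1), \dots, g(a_n)) \rrbracket$, which unfolds via the definition of $F$ to $\{\alpha < \kappa \mid X_\alpha \cap V \neq \emptyset\} \in U$. So it suffices to prove the set equality $A = \{\alpha < \kappa \mid X_\alpha \cap V \neq \emptyset\}$.

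To this end, I would first make the preliminary observation that every $x \in X_\alpha \cap V$ satisfies $g(a_i)(x) = f_{z_{a_i(\alpha)} x}(m_{a_i(\alpha)})$ for each $i$. Indeed, by Claim \ref{Claim : g(a) in S} we have $V \subseteq Y_{a_1} \cap \dots \cap Y_{a_n}$, and since the $\mathbb{X}_\beta$ are pairwise disjoint and each linearly ordered, $x \in X_\alpha \cap Y_{a_i}$ forces $\beta_{a_i x} = \alpha$ and $z_{a_i(\alpha)} \leq x$. For the forward inclusion of the set equality, I would invoke the reflection property of the direct limit $M_\alpha$: if $\alpha \in A$, then, writing $a_i(\alpha) = f_{z_{a_i(\alpha)}}(m_{a_i(\alpha)})$, some $x \in X_\alpha$ above each $z_{a_i(\alpha)}$ already witnesses the atomic formula at stage $x$, so that $x \in X_\alpha \cap V$. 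Conversely, if $x \in X_\alpha \cap V$, the preliminary observation gives $f_x(g(a_i)(x)) = f_{z_{a_i(\alpha)}}(m_{a_i(\alpha)}) = a_i(\alpha)$, and since $f_x \colon M_x \to M_\alpha$ is a homomorphism preserving atomic (hence positive) formulas, $M_\alpha \vDash \varphi(a_1(\alpha), \dots, a_n(\alpha))$, placing $\alpha$ in $A$.

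The main obstacle I anticipate is the forward inclusion, which pivots on the reflection of atomic formulas at some finite stage of a direct limit. This is a familiar property of direct limits of first-order structures under homomorphisms, but extra care is needed when $\varphi$ contains function symbols; there one should additionally verify that term interpretation commutes with the passage $a \mapsto g(a)$ modulo $\er{F}$, using that operations in both $\prod_{\alpha} M_\alpha / U$ and $\prod_{x} M_x / F$ are computed coordinatewise.
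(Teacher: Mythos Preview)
Your proposal is correct and follows essentially the same route as the paper: both reduce the biconditional to the set equality $\{\alpha<\kappa\mid M_\alpha\vDash\varphi(a_1(\alpha),\dots,a_n(\alpha))\}=\{\alpha<\kappa\mid X_\alpha\cap\llbracket\varphi(g(a_1),\dots,g(a_n))\rrbracket\ne\emptyset\}$, establish it via the reflection and preservation of atomic formulas in the direct limits $M_\alpha$ together with the identity $g(a_i)(x)=f_{z_{a_i(\alpha)}x}(m_{a_i(\alpha)})$ for $x\in X_\alpha$ above $z_{a_i(\alpha)}$, and then conclude by the definition of $F$ and the (positive) \L o\'s Theorem. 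The only point the paper makes explicit that you leave implicit is that $\llbracket\varphi(g(a_1),\dots,g(a_n))\rrbracket$ is an upset of $\mathbb{X}$, which is needed for the definition of $F$ to apply; this follows immediately since $g(a_i)\in S_F$ and atomic formulas are preserved by the transition maps $f_{xy}$.
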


\begin{proof}[Proof of the Claim]
We begin by showing that
\begin{align*}
\,&\{ \alpha < \kappa \mid M_\alpha \vDash \varphi(a_1(\alpha), \dots, a_n(\alpha)) \} \\
=\,& \{ \alpha < \kappa \mid M_\alpha \vDash \varphi(f_{z_{a_1(\alpha)}}(m_{a_1(\alpha)}), \dots, f_{z_{a_n(\alpha)}}(m_{a_n(\alpha)})) \}\\
=\,& \{ \alpha < \kappa \mid \text{there exists }x \geq z_{a_1(\alpha)}, \dots, z_{a_n(\alpha)}\text{ s.t. }M_x \vDash  \varphi(f_{z_{a_1(\alpha)}x}(m_{a_1(\alpha)}), \dots, f_{z_{a_n(\alpha)}x}(m_{a_n(\alpha)})) \}\\
=\,& \{ \alpha < \kappa \mid \text{there exists }x \geq z_{a_1(\alpha)}, \dots, z_{a_n(\alpha)}\text{ s.t. }M_x \vDash  \varphi(g(a_1)(x), \dots, g(a_n)(x)) \}\\
=\,& \{ \alpha < \kappa \mid \text{there exists }x \in X_\alpha \cap V_{g(a_1)} \cap \dots \cap V_{g(a_n)}\text{ s.t. }M_x \vDash  \varphi(g(a_1)(x), \dots, g(a_n)(x)) \}\\
=\,& \{ \alpha < \kappa \mid X_\alpha \cap \llbracket \varphi(g(a_1), \dots, g(a_n))\rrbracket \ne \emptyset \}.
\end{align*}
The first of the equalities above holds by the definition of $z_{a_i(\alpha)}$ and $m_{a_i(\alpha)}$, the second because $M_\alpha$ is the direct limit of the chain of structures $\{ M_x \mid x \in X_\alpha \}$, the third by the definition of $g(a_i)$, and the fifth by the definition of $\llbracket \varphi(g(a_1), \dots, g(a_n))\rrbracket$. To prove the fourth, it suffices to show that
\[
X_\alpha \cap V_{g(a_1)} \cap \dots \cap V_{g(a_n)} = \{ x \in X \mid x \geq z_{a_1(\alpha)}, \dots, z_{a_n(\alpha)}\}.
\]
The above equality, in turn, holds because $V_{g(a_i)} = Y_{a_i}$  by Claim \ref{Claim : g(a) in S} and $\mathbb{X}$ is the disjoint union of the various $\mathbb{X}_\beta$.

Observe that $\llbracket \varphi(g(a_1), \dots, g(a_n))\rrbracket$ is an upset of $\mathbb{X}$ (because positive formulas are preserved by homomorphisms). Therefore, from the above series of equalities and the definition of $F$ it follows that
\[
\{ \alpha < \kappa \mid M_\alpha \vDash \varphi(a_1(\alpha), \dots, a_n(\alpha)) \} \in U \, \, \text{  if and only if  }\, \,\llbracket \varphi(g(a_1), \dots, g(a_n))\rrbracket \in F.
\]
By the classical \L os Theorem and its positive version this yields the desired result.
\end{proof}

In view of Claim \ref{Claim : embedding well defined}, the map
\[
\hat{g} \colon \prod_{\alpha < \kappa} M_\alpha / U \to \prod_{x \in X} M_x / F
\]
defined by the rule $\hat{g}(a / \er{U}) \coloneqq g(a) / \er{F}$ is a well-defined embedding. Therefore, to prove that $\hat{g}$ is an isomorphism, it only remains to show that it is surjective. 

To this end, consider $a \in S_F$. For each $\alpha < \kappa$ and $x \in X_\alpha \cap V_a$ let $f_x \colon M_x \to M_\alpha$ be the canonical homomorphism associated with the direct limit $M_\alpha$. For each $\alpha < \kappa$ such that $X_\alpha \cap V_a \ne \emptyset$ there exists some $y_\alpha \in X_\alpha \cap V_a$ such that
\begin{equation}\label{Eq : sujectivity finally}
z_{f_{y_{\alpha}}(a(y_\alpha))} \leq y_a \, \, \text{ and } \, \, a(y_\alpha) = f_{z_{f_{y_{\alpha}}(a(y_\alpha))} \?\? y_\alpha}(m_{f_{y_{\alpha}}(a(y_\alpha))}).
\end{equation}
This is a consequence of the definition of the direct limit $M_\alpha$ and of the assumption that $X_\alpha \cap V_a$ is an upset of the linearly ordered poset $\mathbb{X}_\alpha$.

We define an element $b \in \prod_{\alpha < \kappa} M_\alpha$ as follows: for each $\alpha < \kappa$,
\[
b(\alpha) \coloneqq 
\begin{cases}
f_{y_{\alpha}}(a(y_\alpha)) &\text{if }X_\alpha \cap V_a \ne \emptyset;\\
\text{an arbitrary element of }M_\alpha &\text{otherwise}.\\
\end{cases}
\]
Our aim is to prove that $\hat{g}(b / {\equiv}_U) = a / {\equiv}_F$, i.e., $\llbracket g(b) = a \rrbracket \in F$. Since $g(b) \in S_F$, we know that $\llbracket g(b) = a \rrbracket$ is an upset of $\mathbb{X}$. Therefore, by the definition of $F$ it suffices to show that
\[
\{ \alpha < \kappa \mid X_\alpha \cap \llbracket g(b) = a \rrbracket \ne \emptyset \} \in U.
\]
As $a \in S_F$, we know that $V_a \in F$ and, therefore, that $\{ \alpha < \kappa \mid X_\alpha \cap V_a \ne \emptyset \} \in U$. Therefore, our task reduces to that of proving the inclusion
\[
\{ \alpha < \kappa \mid X_\alpha \cap V_a \ne \emptyset \} \subseteq \{ \alpha < \kappa \mid X_\alpha \cap \llbracket g(b) = a \rrbracket \ne \emptyset \}.
\]
Accordingly, let $\alpha < \kappa$ be such that $X_\alpha \cap V_a \ne \emptyset$. Then $y_\alpha \in X_a \cap V_a$. Furthermore, $b(\alpha) = f_{y_{\alpha}}(a(y_\alpha))$ by the definition of $b$. Therefore, from Condition (\ref{Eq : sujectivity finally}) it follows $z_{b(\alpha)} \leq y_\alpha$. By the definition of $Y_{b}$ this amounts to $y_\alpha \in Y_b$. Since $Y_b = V_{g(b)}$ by Claim \ref{Claim : g(a) in S}, we conclude that $y_\alpha \in V_{g(b)}$. Thus, $y_\alpha \in V_{g(b)} \cap V_a$. In addition, from the definition of $g(b)$ and Condition (\ref{Eq : sujectivity finally}) it follows
\[
g(b)(y_\alpha) = f_{z_{f_{y_{\alpha}}(a(y_\alpha))} \?\? y_\alpha}(m_{f_{y_{\alpha}}(a(y_\alpha))}) = a(y_\alpha).
\]
Therefore, $y_\alpha \in \llbracket g(b) = a \rrbracket$. Hence, we conclude that $X_\alpha \cap \llbracket g(b) = a \rrbracket \ne \emptyset$ as desired.
\end{proof}

\bibliographystyle{plain}

\enlargethispage{2em}

\end{document}